\newtheorem{theorem}{Theorem}
\newtheorem{lemma}{Lemma}
\begin{document}
\baselineskip=17pt

\title{\bf Barban--Davenport--Halberstam type theorems for exponential sums and Piatetski-Shapiro primes}

\author{\bf S. I. Dimitrov}

\date{2024}

\maketitle

\begin{abstract}
In this paper we establish three Barban--Davenport--Halberstam type theorems.  
Namely for exponential sums over primes, for Piatetski-Shapiro primes and for exponential sums over Piatetski-Shapiro primes.
\\
\quad\\
\textbf{Keywords}: Barban--Davenport--Halberstam theorem $\cdot$ Large sieve $\cdot$ Generalized Riemann Hypothesis $\cdot$ Piatetski-Shapiro primes $\cdot$ Exponential sums over primes\\
\quad\\
{\bf  2020 Math.\ Subject Classification}: 11L20 $\cdot$ 11L40 $\cdot$ 11N05 $\cdot$ 11N36
\end{abstract}

\section{Introduction and statement of the result}
\indent

The Barban--Davenport--Halberstam theorem concerns the distribution of primes in arithmetic progression. It states that for any fixed $A>0$ we have
\begin{equation*}
\sum\limits_{q\le Q}\sum\limits_{a=1\atop{(a,\,q)=1}}^q\Bigg|\sum_{n\le X\atop{n\equiv a\,( \textmd{mod}\, q)}} \Lambda(n)-\frac{X}{\varphi(q)}\Bigg|^2\ll XQ\log X\,,
\end{equation*}
where $X(\log X)^{-A}\leq Q\leq X$, $\varphi (n)$ is Euler's function and $\Lambda(n)$ is von Mangoldt's function. See (\cite{Davenport}, Ch. 29) for a proof of this theorem.
Afterwards Montgomery \cite{Montgomery} and Hooley \cite{Hooley} each gave asymptotic formulas of this result, valid for various ranges of $Q$. \
Let $\mathbb{P}$ denotes the set of all prime numbers.
In 1953 Piatetski-Shapiro \cite{Shapiro} has shown that for any fixed $\frac{11}{12}<\gamma< 1$ the set
\begin{equation*}
\mathbb{P}_\gamma=\{p\in\mathbb{P}\;\;|\;\; p= [n^{1/\gamma}]\;\; \mbox{ for some } n\in \mathbb{N}\}
\end{equation*}
is infinite.
The prime numbers of the form $p = [n^{1/\gamma}]$ are called Piatetski-Shapiro primes of type $\gamma$.
His result states that
\begin{equation}\label{Shapiroformula}
\sum\limits_{p\leq X\atop{p=[n^{1/\gamma}]}}1=\frac{X^\gamma}{\log X}+\mathcal{O}\left(\frac{X^\gamma}{\log^2X}\right)
\end{equation}
for $\frac{11}{12}<\gamma<1$.
The best results up to now belong to Rivat and Sargos \cite{Rivat-Sargos} with \eqref{Shapiroformula} for $\frac{2426}{2817}<\gamma<1$ and to Rivat and Wu \cite{Rivat-Wu} with
\begin{equation*}
\sum\limits_{p\leq X\atop{p=[n^{1/\gamma}]}}1\gg\frac{X^\gamma}{\log X}
\end{equation*}
for $\frac{205}{243}<\gamma<1$.
In this connection Peneva \cite{Peneva}, Wang and Cai \cite{Cai}, Lu \cite{Lu} and J. Li, M. Zhang and F. Xue \cite{Zhang-Li} proved a Bombieri -- Vinogradov type theorems for Piatetski-Shapiro primes.
In \cite{Dimitrov1} and \cite{Dimitrov2} the author established a Bombieri -- Vinogradov type result for exponential sums over primes and for exponential sums over Piatetski-Shapiro primes, respectively.
As a continuation of these studies first we establish the Barban--Davenport--Halberstam theorem for exponential sums over primes.
\begin{theorem}\label{Theorem1}
Let $1<c<3$, $c\neq2$, $0<\mu<1$, $|t|\leq X^{\frac{2}{3}-c-\delta}$ for a sufficiently small $\delta>0$ and $A>0$ be fixed. Then 
\begin{equation*}
\sum\limits_{q\le Q}\sum\limits_{a=1\atop{(a,\,q)=1}}^q\Bigg|\sum_{\mu X<n\le X\atop{n\equiv a\,( \textmd{mod}\, q)}} \Lambda(n) e(t n^c)-\frac{1}{\varphi(q)}\int\limits_{\mu X}^{X}e(t y^c)\,dy\Bigg|^2\ll XQ\log X\,,
\end{equation*}
where $X(\log X)^{-A}\leq Q\leq X$. Here $e(t)=e^{2\pi it}$.
\end{theorem}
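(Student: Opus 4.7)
The plan is to adapt the standard large-sieve proof of the classical Barban--Davenport--Halberstam theorem, with the oscillating factor $e(tn^c)$ folded into the Dirichlet coefficients at every stage. Using orthogonality of characters modulo $q$, for $(a,q)=1$ I write
\[
\sum_{\substack{\mu X<n\le X\\ n\equiv a\,(q)}}\Lambda(n)e(tn^c)=\frac{1}{\varphi(q)}\sum_{\chi\bmod q}\bar\chi(a)\psi_t(X,\chi),\qquad\psi_t(X,\chi):=\sum_{\mu X<n\le X}\chi(n)\Lambda(n)e(tn^c).
\]
After absorbing the main term $\frac{1}{\varphi(q)}\int_{\mu X}^X e(ty^c)\,dy$ into the principal character contribution and invoking Parseval's identity on the group of residues coprime to $q$, the left-hand side of Theorem~\ref{Theorem1} is reduced to
\[
\Sigma_0+\Sigma_1:=\sum_{q\le Q}\frac{1}{\varphi(q)}\left|\psi_t(X,\chi_0)-\int_{\mu X}^X e(ty^c)\,dy\right|^2+\sum_{q\le Q}\frac{1}{\varphi(q)}\sum_{\chi\ne\chi_0}|\psi_t(X,\chi)|^2.
\]

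For $\Sigma_0$ I would replace $\psi_t(X,\chi_0)$ by $\sum_{\mu X<n\le X}\Lambda(n)e(tn^c)$ at a cost of $O(\omega(q)\log X)$ from the coprimality condition, and then apply a prime number theorem with exponential twist
\[
\sum_{\mu X<n\le X}\Lambda(n)e(tn^c)=\int_{\mu X}^X e(ty^c)\,dy+O\bigl(X\exp(-c_1\sqrt{\log X})\bigr),
\]
valid in the range $|t|\le X^{2/3-c-\delta}$ by Vinogradov's method, essentially along the lines of \cite{Dimitrov1}; this gives $\Sigma_0\ll X$. For $\Sigma_1$ I split the $q$-range at $Q_0=(\log X)^{2A+4}$. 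The piece $q\le Q_0$ is estimated using a Siegel--Walfisz type bound
\[
\psi_t(X,\chi)\ll X\exp(-c_2\sqrt{\log X})\qquad(\chi\ne\chi_0,\ \chi\bmod q,\ q\le Q_0),
\]
obtained by combining the classical zero-free region and Siegel's theorem for $L(s,\chi)$ with Vaughan's identity followed by van der Corput / Weyl differencing to absorb $e(tn^c)$ in the stated range of $t$. The piece $q>Q_0$ is handled by reducing characters to primitive ones and applying Gallagher's hybrid large sieve with coefficients $a_n=\Lambda(n)e(tn^c)$, giving
\[
\sum_{q\le Q}\frac{q}{\varphi(q)}\sideset{}{^*}\sum_{\chi\bmod q}|\psi_t(X,\chi)|^2\ll (X+Q^2)X\log^B X,
\]
which goes through verbatim since $\sum_{\mu X<n\le X}|a_n|^2\ll X\log X$. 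Standard Abel summation in $q$ then delivers the bound $\ll XQ\log X$ for $X(\log X)^{-A}\le Q\le X$.

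The main obstacle is the twisted Siegel--Walfisz step: controlling $\psi_t(X,\chi)$ uniformly in $\chi$ and $t$ forces one to merge a log-free treatment of $L(s,\chi)$ for small moduli with sharp exponential-sum estimates for $e(tn^c)$, and it is precisely the sharpness of the latter that dictates the range $|t|\le X^{2/3-c-\delta}$ in the hypothesis. The large sieve step is oblivious to $t$, so any relaxation of this condition would have to come from improving this Siegel--Walfisz input.
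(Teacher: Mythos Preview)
Your overall architecture coincides with the paper's: orthogonality plus Parseval reduce the left side to $\sum_{q}\frac{1}{\varphi(q)}\sum_{\chi}|\psi_t(X,\chi)-\delta(\chi)\int|^2$; the principal-character part is handled by a twisted prime number theorem (the paper's Lemma~\ref{SIasympt}), and the large-modulus part by passage to primitive characters, dyadic decomposition in $q$, and the large sieve with coefficients $a_n=\Lambda(n)e(tn^c)$, exactly as you describe.

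The one substantive difference is the small-modulus non-principal piece. You assert an \emph{individual} Siegel--Walfisz bound $\psi_t(X,\chi)\ll X\exp(-c_2\sqrt{\log X})$ for each $\chi\ne\chi_0$ of modulus $q\le Q_0$; the paper does not establish this. Instead it proves only an \emph{averaged} $L^1$ bound (Lemma~\ref{SumPsixchit2}),
\[
\sum_{1<q\le(\log X)^C}\frac{1}{\varphi(q)}\sideset{}{^*}\sum_{\chi(q)}\bigl|\psi_t(X,\chi)\bigr|\ll \frac{X}{(\log X)^{B}},
\]
and then bounds the corresponding $L^2$ sum $\Sigma_3$ by pulling out one factor $|\psi_t|\ll X$ trivially and applying this $L^1$ estimate to the remaining factor. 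The proof of Lemma~\ref{SumPsixchit2} itself splits according to the size of $|t|$: for $|t|\le X^{-c}(\log X)^D$ one Abel-summates back to the classical Siegel--Walfisz theorem (so here there \emph{is} an individual bound), while for $X^{-c}(\log X)^D\le|t|\le X^{2/3-c-\delta}$ the paper appeals to a zero-density based lemma from the literature (Lemma~\ref{SumPsixchit1}, drawn from Li--Xue--Zhang, Tolev, Zhu) which is inherently an average over $q$ and $\chi$. This second range, and not the principal-character asymptotic (Lemma~\ref{SIasympt} only needs $|t|\le X^{1-c-\varepsilon}$), is where the restriction $|t|\le X^{2/3-c-\delta}$ actually enters. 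Your proposed individual bound via ``zero-free region plus Siegel plus Vaughan plus van der Corput'' conflates two distinct mechanisms and would require its own argument; the paper's route sidesteps this by working on average from the outset.
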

Assuming that the Generalized Riemann Hypothesis (GRH) is true we prove the Barban--Davenport--Halberstam theorem over Piatetski-Shapiro primes.
\begin{theorem}\label{Theorem2}
Assume the GRH. Let $\frac{2426}{2817}<\gamma<1$ and $0<\mu<1$. Then 
\begin{equation*}
\sum\limits_{q\le Q}\sum\limits_{a=1\atop{(a,\,q)=1}}^q\Bigg|\sum\limits_{\mu X<n\le X\atop{n\equiv a\, ( \textmd{mod}\, q)\atop{n=[k^{1/\gamma}]}}}\Lambda(n)-\frac{X^\gamma}{\varphi(q)}\Bigg|^2\ll X^\gamma Q\log X\,,
\end{equation*}
where $X^\gamma(\log X)^{-2}\leq Q\leq X^\gamma$.
\end{theorem}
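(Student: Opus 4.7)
The plan is to separate the Piatetski--Shapiro condition from the arithmetic-progression condition by decomposing the characteristic function of $\mathbb{P}_\gamma$ analytically, and then to handle the resulting smooth and oscillatory contributions by two different mechanisms. The standard identity
$$
\mathbf{1}_{\{n = [k^{1/\gamma}]\}} = \lfloor -n^\gamma\rfloor - \lfloor -(n+1)^\gamma\rfloor = \gamma n^{\gamma-1} + \psi(-(n+1)^\gamma) - \psi(-n^\gamma) + O(n^{\gamma-2}),
$$
where $\psi(x) = x - \lfloor x\rfloor - 1/2$, decomposes the inner sum in Theorem~\ref{Theorem2} into a smooth piece $\gamma \sum \Lambda(n)\, n^{\gamma - 1}$, a $\psi$-oscillation $\sum \Lambda(n)\, \Delta\psi(n)$ with $\Delta\psi(n) := \psi(-(n+1)^\gamma) - \psi(-n^\gamma)$, and a tail that is negligible even after squaring and summing. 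The bound $|A + B + C|^2 \le 3(|A|^2 + |B|^2 + |C|^2)$ then splits the BDH sum on the left of Theorem~\ref{Theorem2} accordingly.

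For the smooth piece I would use Abel summation to rewrite $\sum_{n \equiv a\,(q)}\Lambda(n)\, n^{\gamma-1}$ in terms of the progression count $\theta(t;q,a) := \sum_{\mu X < n \le t,\, n \equiv a\,(q)}\Lambda(n)$, whose expected value $(t - \mu X)/\varphi(q)$ integrates to reproduce the main term $X^\gamma/\varphi(q)$ up to a harmless $\mu^\gamma X^\gamma/\varphi(q)$ correction that is absorbed using the lower bound $Q \ge X^\gamma(\log X)^{-2}$. Inserting the classical Barban--Davenport--Halberstam theorem into the resulting Abel integral and applying Minkowski's inequality in $\ell^2$ then yields the desired $O(X^\gamma Q\log X)$ bound for the smooth contribution.

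The heart of the proof is the $\psi$-oscillation. Using character orthogonality,
$$
\sum_{\substack{a=1 \\ (a,q)=1}}^q \Bigg| \sum_{\substack{\mu X < n \le X \\ n \equiv a\,(q)}} \Lambda(n)\, \Delta\psi(n) \Bigg|^2 = \frac{1}{\varphi(q)} \sum_{\chi \bmod q} \Bigg| \sum_{\substack{\mu X < n \le X \\ (n,q)=1}} \Lambda(n)\, \chi(n)\, \Delta\psi(n) \Bigg|^2,
$$
and then inserting the Vaaler Fourier approximation $\psi(x) = \sum_{1 \le |h| \le H} c_h\, e(hx) + R_H(x)$ with $c_h \ll 1/|h|$ reduces matters to estimating the twisted exponential sums $\sum_n \Lambda(n)\chi(n)\, e(-h(n+1)^\gamma)$ and $\sum_n \Lambda(n)\chi(n)\, e(-h n^\gamma)$ for $1 \le h \le H$. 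Under GRH the explicit formula yields $\sum_{n \le X}\Lambda(n)\chi(n) \ll X^{1/2}\log^2(qX)$ for non-principal $\chi$; combining this with a Vaughan decomposition of $\Lambda$ into Type I and Type II bilinear forms and then applying the Rivat--Sargos exponent-pair estimates to the resulting $e(-h n^\gamma)$-weighted sums supplies the cancellation needed on each twisted exponential sum.

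The main obstacle is precisely this last step: producing bounds on the twisted exponential sums that are uniform in $\chi \bmod q$ for $q \le Q \approx X^\gamma$ and strong enough that, after summing over $1 \le h \le H$ (with an optimally chosen $H$), over $\chi \bmod q$, and over $q \le Q$, the oscillatory contribution is kept within $O(X^\gamma Q\log X)$. The restriction $\gamma > 2426/2817$ in the hypothesis of Theorem~\ref{Theorem2} is exactly the range in which the Rivat--Sargos exponent pair yields the saving needed in the relevant $(h,n)$-bilinear sums, mirroring its role in the unconditional Piatetski--Shapiro prime counting formula~\eqref{Shapiroformula}.
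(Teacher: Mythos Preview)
Your strategy diverges from the paper's in a way that creates a real gap in the oscillatory piece. The paper does \emph{not} decompose the Piatetski--Shapiro indicator first; it applies character orthogonality directly to $\sum_{n\in\mathbb P_\gamma,\,n\equiv a(q)}\Lambda(n)$, obtaining $\sum_q\varphi(q)^{-1}\sum_{\chi}|\sum_{n\in\mathbb P_\gamma}\Lambda(n)\chi(n)|^2$, and then splits the moduli at $Q_1=\log^4X$. For $Q_1<q\le Q$ it applies the large sieve (Lemma~\ref{largesieve}) to the sparse sequence $a_n=\Lambda(n)\mathbf 1_{n\in\mathbb P_\gamma}$, exploiting $\sum|a_n|^2\ll X^\gamma\log X$; only for $q\le Q_1$ does it carry out your smooth/oscillatory split, inside Lemma~\ref{SumLchigama}, where the twisted sums are handled not by Vaughan/exponent pairs but by the explicit formula under GRH together with the integral bounds of Lemma~\ref{Irhohtest}. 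The constraint $\gamma>2426/2817$ enters only through \eqref{Shapiroformula} in the principal-character term $\Gamma_1$.

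By decomposing the indicator before orthogonality you are forced to bound the twisted sums $\sum_n\Lambda(n)\chi(n)e(-hn^\gamma)$ uniformly over all $\chi$ of conductor up to $Q\asymp X^\gamma$, and your proposed ``Vaughan plus Rivat--Sargos'' route cannot do this. In any Vaughan decomposition the Type~I piece contains an inner sum $\sum_n\chi(mn)e(-h(mn)^\gamma)$; van der Corput/exponent-pair methods need to freeze the residue of $n$ modulo $q$ first, and the resulting loss of a factor $q$ is fatal once $q$ is a power of $X$. Nor does GRH help directly: partial summation from the bound $\sum_{n\le y}\Lambda(n)\chi(n)\ll y^{1/2}\log^2(qy)$ to the $e(-hn^\gamma)$-weighted sum costs a factor $|h|X^\gamma$ (the total variation of the phase), erasing the square-root saving. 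The paper sidesteps all of this by confining the twisted-sum analysis to $q\le\log^C X$ and using the large sieve, an entirely different mechanism, for large moduli. A minor secondary point: your smooth piece invokes the ``classical'' BDH theorem, but that is stated only for $Q\ge X(\log X)^{-A}$, far above $X^\gamma$; you would need the GRH-strengthened version there as well.
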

The third result concerns the Barban--Davenport--Halberstam theorem for exponential sums over Piatetski-Shapiro primes.
\begin{theorem}\label{Theorem3}
Assume the GRH. Let $0<\mu<1$, $\frac{11}{12}<\gamma<1<c<3$, $c\neq2$ and $|t|\leq X^{\frac{4\gamma-3c-1}{3}-\delta}$ for a sufficiently small $\delta>0$, and $A>0$ be fixed. Then
\begin{equation*}
\sum\limits_{q\le Q}\sum\limits_{a=1\atop{(a,\,q)=1}}^q
\Bigg|\sum\limits_{\mu X<n\le X\atop{n\equiv a\, ( \textmd{mod}\, q)\atop{n=[k^{1/\gamma}]}}}\Lambda(n)n^{1-\gamma}e(t n^c)-\frac{\gamma}{\varphi(d)}\int\limits_{\mu X}^Xe(t y^c)\,dy\Bigg|^2\ll X^{2-\gamma} Q\log X\,,
\end{equation*}
where $X^\gamma(\log X)^{-A}\leq Q\leq X^\gamma$.
\end{theorem}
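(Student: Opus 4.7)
The approach is to combine the exponential-sum machinery of Theorem~\ref{Theorem1} with the Piatetski-Shapiro apparatus underlying Theorem~\ref{Theorem2}. The starting point is the Piatetski-Shapiro identity $\mathbf{1}_{n=[k^{1/\gamma}]}=\lfloor -n^\gamma\rfloor-\lfloor -(n+1)^\gamma\rfloor$, which together with $\lfloor x\rfloor=x-\psi(x)-\tfrac12$ yields
\begin{equation*}
n^{1-\gamma}\mathbf{1}_{n=[k^{1/\gamma}]} = \gamma + O(n^{-1}) - n^{1-\gamma}\bigl(\psi(-(n+1)^\gamma)-\psi(-n^\gamma)\bigr).
\end{equation*}
Substituting this into the inner sum and applying $|A+B|^2\le 2|A|^2+2|B|^2$ splits the left-hand side of Theorem~\ref{Theorem3} into a \emph{smooth part}, namely $\gamma\sum_{n\equiv a(q)}\Lambda(n)e(tn^c)$ measured against $\gamma\varphi(q)^{-1}\int_{\mu X}^X e(ty^c)\,dy$, and a \emph{$\psi$-part} with zero expected value, plus a trivially small $O(n^{-1})$-tail.

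For the smooth part I would invoke Theorem~\ref{Theorem1}, which contributes exactly the desired $\ll XQ\log X\ll X^{2-\gamma}Q\log X$. The wider admissible range of $t$ in Theorem~\ref{Theorem3} is paid for by the GRH assumption, which inside the proof of Theorem~\ref{Theorem1} upgrades the key estimate for $\sum_{n\le X}\chi(n)\Lambda(n)e(tn^c)$ from the unconditional zero-density input to $O(X^{1/2+\varepsilon}q^{\varepsilon})$, and this extension is routine. For the $\psi$-part I would apply Vaaler's truncated Fourier approximation to $\psi$ with cutoff $H=X^\gamma(\log X)^C$, reducing it (up to a negligible Vaaler error) to a sum over $1\le|h|\le H$, weighted by $1/|h|$, of Barban--Davenport--Halberstam quantities for the smoothed sums
\begin{equation*}
\sum_{\mu X<n\le X,\,n\equiv a\,(q)}\Lambda(n)n^{1-\gamma}e\bigl(tn^c-hn^\gamma\bigr).
\end{equation*}
Expanding the square over $a$ and using orthogonality of Dirichlet characters, this becomes $\sum_{q\le Q}\varphi(q)^{-1}\sum_{\chi\ne\chi_0}|W_h(\chi)|^2$, to which I would apply a Vaughan (or Heath--Brown) decomposition of $\Lambda$ followed by the multiplicative large sieve and standard exponential-sum bounds for the resulting Type~I and Type~II bilinear pieces twisted by the combined phase $ty^c-hy^\gamma$.

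The main obstacle will be the Type~II estimate for $W_h(\chi)$ in the simultaneous regime $|t|\sim X^{(4\gamma-3c-1)/3-\delta}$ and $|h|\sim X^\gamma$: one must extract cancellation from the combined phase $ty^c-hy^\gamma$ via repeated differentiation (van der Corput or Kusmin--Landau estimates, in the spirit of Rivat--Sargos) while keeping a power saving over the large-sieve trivial bound and accommodating the weight $n^{1-\gamma}$. The balance between these competing sources of cancellation, together with the Vaaler cutoff $H=X^\gamma$ and the allowed range $X^\gamma(\log X)^{-A}\le Q\le X^\gamma$, is precisely what dictates the admissible range of $t$ and the shape $X^{2-\gamma}Q\log X$ of the final bound.
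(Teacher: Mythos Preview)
Your plan diverges structurally from the paper's, and the divergence creates a real gap in the large-modulus range. The paper passes to Dirichlet characters \emph{first} and only afterwards, when treating the principal-character term $\Omega_1$ and the small-modulus term $\Omega_3$, opens up the Piatetski--Shapiro indicator via $[-n^\gamma]-[-(n+1)^\gamma]$. For the large-modulus piece $\Omega_4$ (moduli $Q_2<q\le Q$) it applies the large sieve of Lemma~\ref{largesieve} \emph{directly} to the coefficients $a_n=\Lambda(n)n^{1-\gamma}e(tn^c)\mathbf{1}_{n=[k^{1/\gamma}]}$, which are supported on a set of size $\asymp X^\gamma$; this produces the factor $X^\gamma+Q^2/4^i$ and, since $Q\le X^\gamma$, the target $X^{2-\gamma}Q\log X$ follows with no exponential-sum input at all. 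By opening the indicator first, your $\psi$-part sums $W_h(\chi)$ run over \emph{all} $n\in(\mu X,X]$; the large sieve then yields $(X+Q^2)\sum_{n\le X}\Lambda^2(n)n^{2-2\gamma}\asymp X^{2\gamma}\cdot X^{3-2\gamma}\log X=X^3\log X$, a full factor of $X$ too large. You propose to recover this loss via Type~II cancellation from the phase $tn^c-hn^\gamma$, but there is no standard mechanism that combines the multiplicative large sieve with additive-phase cancellation simultaneously, and your outline does not supply one. The paper's order of operations --- large sieve on the \emph{short} Piatetski--Shapiro sequence first --- is exactly what makes the large-$q$ range trivial.

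Two further points. Your Vaaler cutoff $H=X^\gamma(\log X)^C$ is much larger than the paper's choice $H\asymp X^{(1-\gamma)/3}$ inside Lemma~\ref{SumLexpchigama}. More importantly, your claim that GRH ``routinely'' extends the $t$-range of Theorem~\ref{Theorem1} via a bound $\Psi(X,\chi,t)\ll X^{1/2+\varepsilon}$ is too optimistic: partial summation from $\Psi(y,\chi)\ll y^{1/2+\varepsilon}$ only gives $\Psi(X,\chi,t)\ll(1+|t|X^c)X^{1/2+\varepsilon}$, which for $|t|X^c$ as large as $X^{(4\gamma-1)/3}$ already exceeds $X$. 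The paper's actual mechanism (Lemma~\ref{SumLexpchigama}) inserts the explicit formula of Lemma~\ref{PsiXchiformula} inside the integral and then estimates the resulting oscillatory integrals $I_\rho(h,t)$ via Lemma~\ref{Irhohtest}, gaining an additional factor $(|t|X^c+|h|X^\gamma)^{-1/2}$ from stationary phase; this, not a pointwise GRH bound, is what delivers the range $|t|\le X^{(4\gamma-3c-1)/3-\delta}$. Finally, for the principal-character $\psi$-contribution the paper quotes Lemma~\ref{Kumchev}, and this is precisely where the hypothesis $\gamma>\tfrac{11}{12}$ enters --- a constraint your plan does not account for.
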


\section{Notations}
\indent

Assume that $X$ is a sufficiently large positive number. The letter $p$ will always denote prime number.
We use $[t]$ and $\{t\}$ to denote the integer part, respectively, the fractional part of $t$. Moreover $e(y)=e^{2\pi i y}$ and $\psi(t)=\{t\}-1/2$. 
Instead of $m\equiv n\,\pmod {d}$ we write for simplicity $m\equiv n\,(d)$.
The letter $\chi$ denotes a Dirichlet character to a given modulus.
The sums $\sum_{\chi(d)}$ and $\sum_{\chi(d)}^*$ denotes respectively summation over all characters and all primitive characters modulo $d$.
Throughout this paper we suppose that $0<\mu<1$.
Define
\begin{align}
\label{Psichi}
&\Psi(X,\chi)=\sum\limits_{n\leq X}\Lambda(n)\chi(n)\,,\\
\label{Psichit}
&\Psi(X,\chi,t)=\sum\limits_{\mu X<n\leq X}\Lambda(n)\chi(n)e(t n^c)\,.
\end{align}
We use $\rho=\alpha+i\beta$ to denote a non-trivial zero of the Dirichlet L-function $L(s,\chi)$ and we write
\begin{equation}\label{Irhoht}
I_\rho(h,t)=\int\limits_{\mu X}^{X_1}y^{\rho-1}e\big(ty^c-h y^\gamma\big)\,dy
\end{equation}
for some $X_1\in (\mu X, X]$. We let $\sum_{|\beta|\leq T}$ denote summation over all $\rho$ with $|\beta|\leq T$.
If $T\geq2$ and $0\leq\sigma\leq1$ we denote by $N(\sigma,T,\chi)$ the number of zeros
$\rho=\alpha+i\beta$ of the function $L(s,\chi)$ counted with multiplicity in the rectangle $\sigma\leq\alpha\leq1$,\; $ |\beta|\leq T$.

\section{Lemmas}
\indent

\begin{lemma}\label{Vaaler}
For every $H\geq1$, we have
\begin{equation*}
\psi(t)=\sum\limits_{1\leq|h|\leq H}a(h)e(ht)+\mathcal{O}\Bigg(\sum\limits_{|h|\leq H}b(h)e(ht)\Bigg)\,,
\end{equation*}
where
\begin{equation}\label{bh}
a(h)\ll\frac{1}{|h|}\,,\quad b(h)\ll\frac{1}{H}\,.
\end{equation}
\end{lemma}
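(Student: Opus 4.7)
The plan is to derive the statement from Vaaler's construction of a trigonometric polynomial approximation to the sawtooth function via Beurling--Selberg extremal functions. I would begin by recalling that $\psi(t)$ admits the conditionally convergent Fourier expansion
\begin{equation*}
\psi(t)=-\sum_{h\neq 0}\frac{e(ht)}{2\pi ih},
\end{equation*}
which already shows that $a(h)\ll 1/|h|$ is the natural coefficient size. The nontrivial content of the lemma is that this series can be truncated at height $H$ with the tail absorbed into an error trigonometric polynomial of degree $H$ whose coefficients decay like $1/H$.

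To construct explicit $a(h)$ and $b(h)$, I would invoke Beurling's entire function $B(z)$ of exponential type $2\pi$ that majorizes $\mathrm{sgn}(x)$ and satisfies $\int_{\mathbb{R}}(B(x)-\mathrm{sgn}(x))\,dx=1$. Rescaling to $B_H(x)=B(2Hx)$ produces an entire function of exponential type $2\pi H$ whose Fourier transform is supported in $[-H,H]$. Vaaler then forms the combinations
\begin{equation*}
V_H(x)=\tfrac12\bigl(B_H(-x)-B_H(x)\bigr),\qquad W_H(x)=\tfrac12\bigl(B_H(x)+B_H(-x)\bigr),
\end{equation*}
which satisfy $|V_H-\psi|\leq W_H$ pointwise on $\mathbb{R}$. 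Periodizing via Poisson summation then yields $1$-periodic functions which, because the Fourier transforms of $V_H$ and $W_H$ are supported in $[-H,H]$, are in fact trigonometric polynomials of degree $H$; this is exactly the form asserted in the lemma.

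The coefficient bounds would follow from two observations. First, comparing the Fourier coefficients of the periodized $V_H$ with the coefficients $-1/(2\pi ih)$ of $\psi$ yields $a(h)\ll 1/|h|$. Second, since $W_H\geq 0$ is a non-negative trigonometric polynomial, each of its Fourier coefficients is bounded in absolute value by its zeroth coefficient, which equals $\int_0^1 W_H(x)\,dx=O(1/H)$ by the extremal $L^1$-property of $B$; this gives $b(h)\ll 1/H$.

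The main technical obstacle is verifying the extremal properties of the Beurling function, namely its explicit representation involving $\bigl((\sin\pi z)/\pi\bigr)^2$ and $\cot\pi z$, together with the majorization $|V_H-\psi|\leq W_H$. These computations are by now entirely standard, and in practice one simply invokes Theorem~18 of Vaaler's 1985 paper \textit{Some extremal functions in Fourier analysis}, where precisely this lemma is established.
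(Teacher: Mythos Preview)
Your proposal is correct and takes essentially the same approach as the paper: the paper's proof consists solely of the citation ``See \cite{Vaaler},'' and your sketch ultimately defers to Theorem~18 of that same reference while supplying an outline of the Beurling--Selberg construction behind it.
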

\begin{proof}
See \cite{Vaaler}.
\end{proof}

\begin{lemma}\label{Exponentpairs}
Let $|f^{(m)}(u)|\asymp YX^{1-m}$  for $1\leq X<u<X_0\leq2X$ and $m\geq1$.
Then
\begin{equation*}
\bigg|\sum_{X<n\le X_0}e(f(n))\bigg|
\ll Y^\varkappa X^\lambda +Y^{-1}\,,
\end{equation*}
where $(\varkappa, \lambda)$ is any exponent pair.
\end{lemma}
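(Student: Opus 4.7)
\medskip

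\textbf{Proof proposal.} The plan is to recognise the stated bound as a repackaging of the defining property of an exponent pair, up to a normalisation and an extra safeguard for small $Y$. Setting $T:=YX$, the hypothesis rewrites as $|f^{(m)}(u)|\asymp T\,X^{-m}$ for all $m\ge 1$ and $X<u<X_0\le 2X$, which is the canonical shape of the derivative conditions under which the theory of exponent pairs operates (as developed, for example, in Graham--Kolesnik).

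In the main range $Y\ge 1$ I would invoke the definition of the exponent pair $(\varkappa,\lambda)$ directly, obtaining
\[
\bigg|\sum_{X<n\le X_0}e(f(n))\bigg|\ll\Big(\frac{T}{X}\Big)^{\!\varkappa}X^{\lambda}=Y^{\varkappa}X^{\lambda},
\]
which already has the required shape. In the complementary range where $Y$ is small the exponent-pair estimate can become weaker than the elementary Kuzmin--Landau bound. Since $|f''(u)|\asymp Y/X$ has constant sign on the interval, $f'$ is monotone; combined with $|f'(u)|\asymp Y$, this forces $\|f'(u)\|$ to stay bounded below by a fixed positive multiple of $Y$ throughout, after at most $O(1)$ splittings at points where $f'$ crosses an integer. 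The Kuzmin--Landau inequality then yields a contribution $\ll Y^{-1}$, which is precisely the second term. Assembling the two regimes gives the stated estimate.

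The only delicate point is matching the hypothesis $|f^{(m)}|\asymp YX^{1-m}$ for all $m\ge 1$ to the exact set of derivative bounds required by the chosen reference for the specific pair $(\varkappa,\lambda)$ in play. This is reference-dependent but routine, typically handled by iterating van der Corput $A$- and $B$-processes, and I would not expect any genuinely new idea to be required.
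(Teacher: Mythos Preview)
Your proposal is correct and matches the paper's approach: the paper simply cites Graham--Kolesnik, Chapter~3, and what you have written is precisely the content of that reference---the exponent-pair bound $(T/X)^{\varkappa}X^{\lambda}$ with $T=YX$ in the main regime, supplemented by the Kuzmin--Landau first-derivative estimate $\ll Y^{-1}$ to cover the range where $Y$ is small. There is nothing to add.
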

\begin{proof}
See (\cite{Graham-Kolesnik}, Ch. 3).
\end{proof}

\begin{lemma}\label{SIasympt} Let $1<c<3$, $c\neq2$, $0<\mu<1$ and $|t|\leq X^{1-c-\varepsilon}$.
Then 
\begin{equation*}
\sum\limits_{\mu X<p\leq X} e(t p^c)\log p=\int\limits_{\mu X}^{X}e(t y^c)\,dy+\mathcal{O}\left(\frac{X}{e^{(\log X)^{1/5}}}\right)\,.
\end{equation*}
\end{lemma}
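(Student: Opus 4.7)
My plan is to first reduce the prime sum with weight $\log p$ to the Chebyshev-type sum $\sum_{\mu X<n\le X}\Lambda(n)e(tn^c)$, losing only $\mathcal{O}(X^{1/2})$ from higher prime powers. Abel summation then gives
\[
\sum_{\mu X<n\le X}\Lambda(n)e(tn^c) = \int_{\mu X}^X e(ty^c)\,dy + \bigl[e(ty^c)R(y)\bigr]_{\mu X}^X - \int_{\mu X}^X 2\pi i c t\,y^{c-1}e(ty^c)R(y)\,dy,
\]
with $R(y) = \sum_{n\le y}\Lambda(n)-y$; the boundary terms are $\mathcal{O}(Xe^{-c_0\sqrt{\log X}})$ by the prime number theorem, well inside the required error. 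So the entire problem reduces to bounding the residual integral.

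The trivial bound $|R(y)|\ll ye^{-c_0\sqrt{\log y}}$ combined with $|f'(y)|\ll|t|X^{c-1}\le X^{-\varepsilon}$ (where $f(y)=e(ty^c)$) yields only $\mathcal{O}(X^{2-\varepsilon}e^{-c_0\sqrt{\log X}})$ for that integral, which is far too weak. To sharpen this, I would insert the truncated explicit formula
\[
R(y) = -\sum_{|\gamma|\le T}\frac{y^\rho}{\rho} + \mathcal{O}\!\left(\frac{y\log^2(yT)}{T}\right)
\]
and choose $T$ so as to absorb the truncation tail. A further integration by parts in each zero contribution then produces the oscillatory integral
\[
I_\rho = \int_{\mu X}^X y^{\rho-1}e(ty^c)\,dy,\qquad \rho=\beta+i\gamma,
\]
whose phase $\Phi(y)=\gamma\log y+2\pi t y^c$ has derivative $\gamma/y+2\pi c t y^{c-1}$. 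Each $I_\rho$ is then estimated by the first-derivative test or the stationary-phase formula, splitting into the regimes $|\gamma|\gg|t|X^c$, $|\gamma|\ll|t|X^c$, and the stationary-phase regime $|\gamma|\sim|t|X^c$.

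Summing over non-trivial zeros with $|\gamma|\le T$ via the Vinogradov--Korobov zero-free region and the standard bound $\sum_{|\gamma|\le T}1/|\rho|\ll\log^2 T$, the first two regimes give contributions of size $X\exp\bigl(-c(\log X)^{1/3-o(1)}\bigr)$, comfortably below $X/e^{(\log X)^{1/5}}$. The main obstacle I expect is the stationary-phase regime $|\gamma|\sim|t|X^c$: the pointwise bound $|I_\rho|\ll X^{\beta-c/2}/\sqrt{|t|}$ multiplied by the $\sim|t|X^c\log X$ zeros in that range is not obviously small enough, and here a zero-density estimate of Ingham or Huxley type (or exploitation of the cancellation among the $I_\rho$ arising from the $\rho$-dependence of the stationary point) must enter to produce the required saving. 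Balancing the contribution of this regime against the truncation tail then fixes $T$ and yields the claimed error term.
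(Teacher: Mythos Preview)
The paper gives no proof of its own; it cites Tolev (1992, Lemma~14). Tolev's argument is the explicit-formula route you sketch: reduce to $\sum_{\mu X<n\le X}\Lambda(n)e(tn^c)$, Abel-sum against $R(y)=\psi(y)-y$, insert the truncated explicit formula, bound each $I_\rho=\int y^{\rho-1}e(ty^c)\,dy$ via the first- and second-derivative tests, and sum over zeros using a zero-density estimate together with the Vinogradov--Korobov zero-free region. So your plan and the intended proof coincide.

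What keeps your proposal a sketch rather than a proof is precisely the step you flag as ``the main obstacle'': the stationary-phase window $|\gamma|\asymp T_0:=|t|X^c$. You correctly note that multiplying the pointwise bound $|I_\rho|\ll X^\beta T_0^{-1/2}$ by the $\asymp T_0\log T_0$ zeros in that range is too crude, and that a density input is required---but then you stop. This is the heart of the argument and must actually be carried out. In fact Ingham's bound $N(\sigma,T_0)\ll T_0^{3(1-\sigma)/(2-\sigma)}(\log T_0)^{O(1)}$ already suffices: with $T_0=X^\theta$, $0\le\theta\le1-\varepsilon$, partial summation in $\sigma$ combined with the zero-free region (which caps $\sigma$ at $1-c(\log T_0)^{-2/3-o(1)}$) gives
\[
T_0^{-1/2}\sum_{|\gamma|\le T_0}X^\beta\ \ll\ X\exp\bigl(-c'(\log X)^{3/5-o(1)}\bigr)
\]
uniformly in $\theta$, the worst case occurring near $\theta\asymp(\log X)^{-2/5}$. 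No cancellation among the $I_\rho$ is needed; once this density computation is written out, your argument is complete and in fact yields a sharper error than the stated $X/\exp\bigl((\log X)^{1/5}\bigr)$.
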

\begin{proof}
See (\cite{Tolev1}, Lemma 14).
\end{proof}

\begin{lemma}\label{Irhohtest} Let $0<\mu<1$, $0<\gamma<1<c$ and $\mu X\leq T\leq X$. Then 
\begin{equation*}
I_\rho(h,t)\ll\begin{cases}\frac{X^\alpha}{\sqrt{|t| X^c+|h|X^\gamma}}\quad\mbox{ for }\quad|\beta|<4\pi c|t| X^c+4\pi\gamma|h| X^\gamma\,,\\
\frac{X^\alpha}{|\beta|}\hspace{19.4mm}\mbox{ for }\quad 4\pi c|t| X^c+4\pi\gamma|h| X^\gamma\leq|\beta|\leq T\,.
\end{cases}
\end{equation*}
Here $I_\rho(h,t)$ is defined by \eqref{Irhoht}.
\end{lemma}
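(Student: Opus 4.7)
The plan is to write the integrand of $I_\rho(h,t)$ as $y^{\alpha-1} e^{i\phi(y)}$ with real phase $\phi(y) = \beta \log y + 2\pi t y^c - 2\pi h y^\gamma$, and to treat the two regimes of the statement by oscillatory integral estimates depending on which part of $\phi$ dominates.

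In the regime $|\beta| \geq 4\pi c|t|X^c + 4\pi\gamma|h|X^\gamma$, the identity $y\phi'(y) = \beta + 2\pi c t y^c - 2\pi\gamma h y^\gamma$ combined with the hypothesis gives $|\phi'(y)| \geq |\beta|/(2y)$ on $[\mu X, X_1]$, and a companion estimate yields $|\phi''(y)| \ll |\beta|/y^2$, since $|t|X^{c-2}+|h|X^{\gamma-2} \ll |\beta|/X^2$ in this case. A single integration by parts, writing $e^{i\phi(y)}\,dy = d(e^{i\phi(y)})/(i\phi'(y))$, produces a boundary term $\ll X^\alpha/|\beta|$; the remaining integrand is dominated by $y^{\alpha-1}/|\beta|$, whose integral over $[\mu X, X_1]$ is again $\ll X^\alpha/|\beta|$. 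This yields the second bound of the lemma.

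In the complementary regime $|\beta| < 4\pi c|t|X^c + 4\pi\gamma|h|X^\gamma$, the plan is to invoke a second-derivative (van der Corput) estimate. Rescaling $y = Xu$ transforms the integral into $X^\rho\int_\mu^{X_1/X} u^{\alpha-1} e^{i\Phi(u)}\,du$ with $\Phi(u) = \beta\log u + Au^c - Bu^\gamma$, $A = 2\pi tX^c$, $B = 2\pi hX^\gamma$, so it suffices to show $\bigl|\int u^{\alpha-1} e^{i\Phi(u)}\,du\bigr| \ll 1/\sqrt{|A|+|B|}$. The key object is $\Phi''(u)u^2 = -\beta + c(c-1)Au^c + \gamma(1-\gamma)Bu^\gamma$, a sum of three monomials in $u$ with distinct exponents $0,\,c,\,\gamma$; by a Descartes-type argument it has at most two positive zeros, so $[\mu,1]$ decomposes into $O(1)$ subintervals on each of which $\Phi''$ has constant sign. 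On a subinterval where $|\Phi''|\gg |A|+|B|$, the 2nd-derivative test directly gives the required estimate; close to a zero of $\Phi''$, one excises a short neighborhood and applies the first-derivative test on it, using the identity $\Phi''(u_*) = c^2 A u_*^{c-2} - \gamma^2 B u_*^{\gamma-2}$ at a critical point $u_*$ of $\Phi$ to rule out simultaneous vanishing of $\Phi'$ and $\Phi''$.

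The principal obstacle is the possible cancellation among the three monomials in $\Phi''$ in Case 1, which forces a casework on $\operatorname{sign}(t),\operatorname{sign}(h),\operatorname{sign}(\beta)$ and the excision-plus-first-derivative device just described; since only $O(1)$ such pieces arise, summing contributions produces the bound $X^\alpha/\sqrt{|t|X^c+|h|X^\gamma}$, establishing the first estimate and completing the proof.
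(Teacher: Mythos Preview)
The paper itself gives no self-contained argument: it merely refers to Lemma~10 of \cite{Tolev2} and asserts that the proof carries over. Your approach---writing the integrand as $y^{\alpha-1}e^{i\phi(y)}$ and applying first- and second-derivative estimates---is the standard one and is almost certainly what Tolev does. Your treatment of the regime $|\beta|\ge 4\pi c|t|X^c+4\pi\gamma|h|X^\gamma$ is correct: the lower bound $|\phi'(y)|\ge|\beta|/(2y)$ together with $|\phi''(y)|\ll|\beta|/y^2$ makes the single integration by parts go through cleanly.

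In the complementary regime your sketch has a genuine gap. The identity $\Phi''(u_*)=c^2Au_*^{c-2}-\gamma^2Bu_*^{\gamma-2}$ at a zero $u_*$ of $\Phi'$ is correct, but it does \emph{not} rule out $\Phi'(u_*)=\Phi''(u_*)=0$: when $A=2\pi tX^c$ and $B=2\pi hX^\gamma$ have the same sign one may choose $u_*\in[\mu,1]$ with $c^2Au_*^{c}=\gamma^2Bu_*^{\gamma}$, and then $\beta=\gamma(c-\gamma)c^{-1}Bu_*^{\gamma}$ forces both derivatives to vanish. A short calculation gives $u_*^{3}\Phi'''(u_*)=\gamma^2(c-\gamma)Bu_*^{\gamma}\asymp|A|+|B|$, so the local contribution near such a degenerate stationary point is $\asymp(|A|+|B|)^{-1/3}$, strictly worse than the claimed $(|A|+|B|)^{-1/2}$; your excision-plus-first-derivative device cannot recover this. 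The argument \emph{does} go through when $t=0$ or $h=0$ (then the identity forces $|\Phi''(u_*)|\asymp|A|+|B|$ at any critical point, and dually $|\Phi'|\asymp|A|+|B|$ at any zero of $\Phi''$), and in the mixed case $I_\rho(h,t)$ used in Lemma~\ref{SumLexpchigama} the hypothesis \eqref{tlimits} forces $|t|X^c=o(|h|X^\gamma)$, which excludes the degenerate configuration for large $X$. So the paper's applications survive, but to establish Lemma~\ref{Irhohtest} in the stated generality you need either an additional hypothesis or a separate treatment of the degenerate stationary point; checking Tolev's actual formulation is advisable here.
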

\begin{proof}
Inspecting the arguments presented in (\cite{Tolev2}, Lemma 10), the reader will readily see that the proof of Lemma \ref{Irhohtest}  can be obtained in the same way.
\end{proof}

\begin{lemma}\label{Kumchev} Let $\frac{11}{12}<\gamma<1<c$ and $|t|\leq X^{\gamma-c+\delta}$ for a sufficiently small $\delta>0$. 
Then 
\begin{equation*}
\sum\limits_{\mu X<n\leq X}\Lambda(n)\big(\psi(-(n+1)^\gamma)-\psi(-n^\gamma)\big)e(t n^c)\ll X^{\frac{11}{12}+\delta}\,.
\end{equation*}
\end{lemma}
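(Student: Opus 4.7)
The plan is to apply Vaaler's approximation (Lemma \ref{Vaaler}) to reduce the problem to estimating exponential sums over primes of Piatetski-Shapiro type, and then to bound those sums via Vaughan's identity combined with the exponent pair method (Lemma \ref{Exponentpairs}). Since $\psi$ is odd off the integers, one first rewrites
\begin{equation*}
\psi(-(n+1)^\gamma)-\psi(-n^\gamma)=\psi(n^\gamma)-\psi((n+1)^\gamma).
\end{equation*}

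Choose a parameter $H=X^{1/12}$. Applying Lemma \ref{Vaaler} to both $\psi(n^\gamma)$ and $\psi((n+1)^\gamma)$, taking the difference, multiplying by $\Lambda(n)e(tn^c)$ and summing over $\mu X<n\le X$ gives
\begin{equation*}
\sum_{\mu X<n\le X}\Lambda(n)\bigl(\psi(-(n+1)^\gamma)-\psi(-n^\gamma)\bigr)e(tn^c)=\sum_{1\le|h|\le H}a(h)\bigl(T(h)-T^*(h)\bigr)+\mathcal{R},
\end{equation*}
where
\begin{equation*}
T(h)=\sum_{\mu X<n\le X}\Lambda(n)e(tn^c+hn^\gamma),\qquad T^*(h)=\sum_{\mu X<n\le X}\Lambda(n)e(tn^c+h(n+1)^\gamma),
\end{equation*}
and the Vaaler error term, together with Lemma \ref{SIasympt} to handle the $h=0$ frequency, satisfies
\begin{equation*}
|\mathcal{R}|\ll\frac{X}{H}+\max_{1\le|h|\le H}\bigl(|T(h)|+|T^*(h)|\bigr).
\end{equation*}
Since $|a(h)|\ll 1/|h|$, the sum over $1\le|h|\le H$ is bounded by $\log H\cdot\max_{1\le|h|\le H}(|T(h)|+|T^*(h)|)$, so Lemma \ref{Kumchev} reduces to the uniform estimate
\begin{equation*}
T(h),\,T^*(h)\ll X^{11/12+\delta/2}\qquad\text{for }1\le|h|\le X^{1/12}.
\end{equation*}

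The central step is this exponential sum bound. I would apply Vaughan's identity to decompose $\Lambda(n)$ into Type I and Type II bilinear sums. The phase $f(n)=tn^c+hn^\gamma$ has derivatives $f^{(m)}(n)\asymp|t|X^{c-m}+|h|X^{\gamma-m}$, and the assumption $|t|\le X^{\gamma-c+\delta}$ forces $|t|X^{c-m}\le X^\delta\cdot|h|X^{\gamma-m}$, so up to a factor $X^\delta$ the phase behaves exactly as that of a genuine Piatetski-Shapiro exponential sum. Type I sums are then controlled by direct application of Lemma \ref{Exponentpairs} with a suitable admissible exponent pair, while Type II sums are treated by Cauchy--Schwarz followed by a further use of the exponent pair machinery. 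The constraint $\gamma>11/12$ is precisely what allows the Type II estimate to attain the exponent $11/12$; this is the same balancing exploited by Kumchev in his treatment of Piatetski-Shapiro primes, adapted to accommodate the perturbation $e(tn^c)$.

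The main obstacle will be to verify that the Type I and Type II bounds are uniform in both $h\in[1,X^{1/12}]$ and $|t|\le X^{\gamma-c+\delta}$, and to select exponent pairs for which the combined bound remains $X^{11/12+\delta/2}$ throughout these ranges. Once this is done, the sum over $h$ with weight $1/|h|$ introduces only a logarithmic loss, completing the proof.
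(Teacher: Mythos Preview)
The paper does not supply a proof of this lemma; it simply cites \cite{Kumchev}, Lemma 10. Your outline---Vaaler's approximation to replace $\psi$, reduction to the exponential sums $T(h)=\sum_n\Lambda(n)e(tn^c+hn^\gamma)$, then Vaughan's identity with Type I/Type II estimates via exponent pairs---is precisely the machinery that the cited Kumchev--Petrov argument employs, so your plan is aligned with what lies behind the paper's citation. As you correctly flag, all of the real work is in the uniform bound $T(h)\ll X^{11/12+\delta/2}$; once that is available, the rest is bookkeeping.

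Two small remarks on the write-up. First, invoking Lemma \ref{SIasympt} for the $h=0$ frequency in the Vaaler error is unnecessary: after taking absolute values the factor $e(tn^c)$ disappears, and the $h=0$ term is just $b(0)\sum_n\Lambda(n)\ll X/H$ by the Chebyshev bound. Second, in both the paper's own treatment of analogous sums (see the handling of $S_3$ in Lemma \ref{SumLchigama}) and in the standard Piatetski-Shapiro literature, one usually exploits the difference $T(h)-T^*(h)$ via Abel summation to extract a factor $|h|X^{\gamma-1}$, which cancels the $1/|h|$ from $a(h)$ and leaves an unweighted sum $X^{\gamma-1}\sum_{|h|\le H}|T(h)|$. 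Your route of bounding $T(h)$ and $T^*(h)$ separately and absorbing the $1/|h|$ as a $\log H$ is perfectly valid and in fact demands only the uniform pointwise bound rather than an averaged one; just be aware that the reference you are effectively reconstructing organises the argument slightly differently.
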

\begin{proof}
See (\cite{Kumchev}, Lemma 10).
\end{proof}

\begin{lemma}\label{SumPsixchit1}
Let $\delta$, $\xi$, $\mu$ and $c$ be positive real numbers, such that
\begin{equation*}
\xi+7\delta<2\,,\quad 3\xi+6\delta<2\,,\quad  0<\mu<1\,,\quad  c>1\,.
\end{equation*}
Let $Q=X^\delta$ and $D\geq2$. Then for $X^{-c}(\log X)^D\leq|t|\leq X^{\xi-c}$ the inequality
\begin{equation*}
\sum\limits_{1<q\le Q}\frac{1}{\varphi(q)}\sideset{}{^*}\sum\limits_{\chi(q)}\Bigg|\sum\limits_{\mu X<n\leq X}\Lambda(n)\chi(n)e(t n^c)\Bigg|\ll \frac{X}{(\log X)^{\frac{D}{2}-17}}
\end{equation*}
holds. 
\end{lemma}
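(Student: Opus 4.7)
The plan is to combine Cauchy--Schwarz, Vaughan's identity, the multiplicative large sieve for primitive characters, and the exponent pair estimate of Lemma \ref{Exponentpairs}.

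First I would apply Cauchy--Schwarz to the outer sums. Since $\sum_{q\le Q}\varphi(q)^{-1}\sideset{}{^*}\sum_{\chi(q)}1\ll Q$, this yields
\[
\sum_{1<q\le Q}\frac{1}{\varphi(q)}\sideset{}{^*}\sum_{\chi(q)}\bigl|\Psi(X,\chi,t)\bigr|
\;\ll\; Q^{1/2}\Bigl(\sum_{q\le Q}\frac{1}{\varphi(q)}\sideset{}{^*}\sum_{\chi(q)}\bigl|\Psi(X,\chi,t)\bigr|^2\Bigr)^{1/2},
\]
so it is enough to show that the second-moment sum on the right is $\ll X^2Q^{-1}(\log X)^{-(D-34)}$.

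Next I would apply Vaughan's identity to $\Lambda(n)$ with parameters $U=V=X^\theta$, where $\theta$ is to be optimised in terms of $\delta$ and $\xi$. After a dyadic subdivision, $\Psi(X,\chi,t)$ splits into $O(\log^2 X)$ pieces, each of Type I,
\[
T_I(\chi)=\sum_{M<m\le 2M}a_m\!\!\sum_{\substack{N<n\le 2N\\ \mu X<mn\le X}}\!\chi(mn)\,e(t(mn)^c),\qquad N\ge X^{1-\theta},
\]
or of Type II,
\[
T_{II}(\chi)=\sum_{M<m\le 2M}\sum_{\substack{N<n\le 2N\\ \mu X<mn\le X}}a_m b_n\,\chi(mn)\,e(t(mn)^c),\qquad X^\theta\le M,N\le X^{1-\theta},
\]
with $|a_m|,|b_n|\ll\log X$ and $MN\asymp X$.

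For the Type I pieces I would pull out $\chi(m)$ and estimate the inner sum $\sum_n\chi(n)e(tm^c n^c)$ by partial summation, using Pólya--Vinogradov's bound $|\sum_{n\le y}\chi(n)|\ll\sqrt{q}\log q$ together with Lemma \ref{Exponentpairs} applied to the exponential factor, whose $k$-th derivative is of size $|t|X^c N^{-k}\le X^\xi N^{-k}$. The lower bound $|t|\ge X^{-c}(\log X)^D$ guarantees a nontrivial derivative scale, so that the exponent pair produces a genuine saving over the trivial bound. For the Type II pieces I would apply Cauchy--Schwarz in $m$, open the square in $n$, and then invoke the multiplicative large sieve inequality
\[
\sum_{q\le Q}\frac{q}{\varphi(q)}\sideset{}{^*}\sum_{\chi(q)}\Bigl|\sum_{n\le N}c_n\chi(n)\Bigr|^2\ll (N+Q^2)\sum_{n\le N}|c_n|^2.
\]
Summing the resulting bounds over all dyadic ranges of $M,N$, the hypotheses $\xi+7\delta<2$ and $3\xi+6\delta<2$ are precisely what is needed to verify that the Type I and Type II contributions are each $\ll X^2Q^{-1}(\log X)^{-(D-34)}$, which gives the claim.

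The main obstacle will be the joint balancing of the parameter $\theta$: the Type I bound improves as $\theta$ decreases, while the Type II bound improves as $\theta$ increases, and the optimal choice must also be compatible with the selection of an exponent pair $(\kappa,\lambda)$ in Lemma \ref{Exponentpairs}. Making the two opposing constraints meet exactly at the level $X^2Q^{-1}(\log X)^{-(D-34)}$ uniformly in $|t|\in[X^{-c}(\log X)^D,X^{\xi-c}]$, under the stated conditions on $\xi$ and $\delta$, is the technical heart of the argument.
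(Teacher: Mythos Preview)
The paper does not give its own argument here; it simply cites Li (Lemma~2.8), Tolev (Lemma~10) and Zhu (Lemma~4.5). Those proofs proceed by Abel summation and the explicit formula (Lemma~\ref{PsiXchiformula}), which converts $\Psi(X,\chi,t)$ into a sum over zeros weighted by the integrals $I_\rho(0,t)$ of Lemma~\ref{Irhohtest}, and then by Montgomery's zero density theorem (Lemma~\ref{Montgomery}). The two constraints $\xi+7\delta<2$ and $3\xi+6\delta<2$ come precisely from integrating $X^\alpha$ against the two density exponents $\frac{3(1-\sigma)}{2-\sigma}$ and $\frac{2(1-\sigma)}{\sigma}$, and the saving $(\log X)^{D/2}$ comes from $|I_\rho(0,t)|\ll X^\alpha/\sqrt{|t|X^c}\le X^\alpha(\log X)^{-D/2}$ for small $|\beta|$. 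None of this is a Vaughan/large-sieve computation.

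Your route, as written, has a genuine gap. After your initial Cauchy--Schwarz you need
\[
\sum_{q\le Q}\frac{1}{\varphi(q)}\sideset{}{^*}\sum_{\chi(q)}\bigl|\Psi(X,\chi,t)\bigr|^2\ll \frac{X^2}{Q}(\log X)^{-(D-34)}.
\]
But the multiplicative large sieve treats $e(tn^c)$ as an arbitrary unimodular weight and hence cannot exploit its oscillation. Carrying out your Type~II step literally (Cauchy--Schwarz in $m$, then large sieve in $n$ with coefficients $b_n e(t(mn)^c)$) gives, after dyadic summation in $q$,
\[
\sum_{q\le Q}\frac{1}{\varphi(q)}\sideset{}{^*}\sum_{\chi}|T_{II}(\chi)|^2
\ll M^2N(N+Q)(\log X)^{O(1)}\asymp X^2(\log X)^{O(1)},
\]
with the main term coming from the small-$q$ range; there is no factor $Q^{-1}$, so the target bound fails by a full power $X^\delta$. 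The same obstruction appears if you apply the large sieve directly to $\Psi(X,\chi,t)=\sum_k c_k\chi(k)$. Your Type~I description is also unclear: P\'olya--Vinogradov after partial summation yields $\sqrt{q}\,(\log q)\cdot(1+|t|X^c)$, which for $|t|$ near $X^{\xi-c}$ is worse than trivial, and Lemma~\ref{Exponentpairs} cannot be applied simultaneously to the same inner sum once the character has been removed by P\'olya--Vinogradov.

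A Vaughan-based argument might be salvageable, but only by abandoning the opening Cauchy--Schwarz and instead splitting small $q$ from large $q$ (as in the standard Bombieri--Vinogradov proof), supplying a separate pointwise Vinogradov-type bound for each fixed $\chi$ with $q$ small. Your proposal does not do this, and in any case the precise numerology $\xi+7\delta<2$, $3\xi+6\delta<2$ and the exponent $D/2-17$ are specific to the explicit-formula/zero-density method used in the cited references.
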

\begin{proof}
See (\cite{Li}, Lemma 2.8, \cite{Tolev2}, Lemma 10 and \cite{Zhu}, Lemma 4.5).
\end{proof}

\begin{lemma}\label{largesieve} (Large Sieve)
For any complex numbers $a_n$ and positive integers  $M, N, Q$ we have
\begin{equation*}
\sum\limits_{q\leq Q}\frac{q}{\varphi(q)}\sideset{}{^*}\sum\limits_{\chi(q)}\bigg|\sum\limits_{n=M+1}^{M+N}a_n\chi(n)\bigg|^2
\ll\big(N + Q^2\big)\sum\limits_{n=M+1}^{M+N}|a_n|^2
\end{equation*}
\end{lemma}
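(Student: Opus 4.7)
The plan is to follow the standard two--step derivation of the multiplicative large sieve, which reduces the inequality first to an additive (analytic) large sieve applied to the Farey fractions $a/q$ with $q\le Q$, and then to a passage from additive characters to Dirichlet characters via Gauss sums.

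First I would establish the additive large sieve: for any $\delta$--well--spaced real numbers $\alpha_1,\dots,\alpha_R\in\mathbb{R}/\mathbb{Z}$ and complex weights $c_n$,
\begin{equation*}
\sum_{r=1}^{R}\bigg|\sum_{n=M+1}^{M+N}c_n\,e(n\alpha_r)\bigg|^{2}\ll\big(N+\delta^{-1}\big)\sum_{n=M+1}^{M+N}|c_n|^2.
\end{equation*}
Any of the classical routes works here: Selberg's extremal majorant combined with Fourier duality, Gallagher's Sobolev--type inequality bounding a finite sum by an integral of a smoothed function, or the Montgomery--Vaughan Hilbert inequality. Each yields the bound with an absolute implicit constant.

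Applying this with $\alpha_r$ ranging over the Farey fractions $a/q$, $1\le q\le Q$, $1\le a\le q$, $(a,q)=1$---which are pairwise separated by at least $1/Q^2$ on $\mathbb{R}/\mathbb{Z}$---gives
\begin{equation*}
\sum_{q\le Q}\sum_{\substack{a=1\\(a,q)=1}}^{q}\bigg|\sum_{n=M+1}^{M+N}a_n\,e(an/q)\bigg|^{2}\ll(N+Q^2)\sum_{n=M+1}^{M+N}|a_n|^2.
\end{equation*}
To pass to Dirichlet characters I would use the Gauss sum identity
\begin{equation*}
\tau(\overline{\chi})\,\chi(n)=\sum_{a=1}^{q}\overline{\chi}(a)\,e(an/q),
\end{equation*}
valid for \emph{every} integer $n$ when $\chi$ is primitive modulo $q$, together with $|\tau(\overline{\chi})|^2=q$. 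Setting $S(a):=\sum_{n}a_n e(an/q)$, multiplying the identity by $a_n$ and summing in $n$ gives $q\,\bigl|\sum_{n}a_n\chi(n)\bigr|^{2}=\bigl|\sum_{(a,q)=1}\overline{\chi}(a)S(a)\bigr|^{2}$. Summing over primitive $\chi\pmod q$, enlarging the sum to all characters modulo $q$, and invoking orthogonality on $(\mathbb{Z}/q\mathbb{Z})^{\times}$ produces
\begin{equation*}
\frac{q}{\varphi(q)}\,\sideset{}{^*}\sum_{\chi(q)}\bigg|\sum_{n}a_n\chi(n)\bigg|^{2}\le\sum_{\substack{a=1\\(a,q)=1}}^{q}|S(a)|^{2}.
\end{equation*}
Summing over $q\le Q$ and inserting the Farey bound above completes the proof.

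The main obstacle is the first step, the analytic large sieve with its sharp $N+\delta^{-1}$ dependence; the two subsequent reductions are essentially algebraic identities (Gauss sums plus character orthogonality) and carry no real content beyond bookkeeping. For the present lemma only a constant of the form $N+CQ^{2}$ is needed to obtain the stated $\ll$ inequality, so the cruder Sobolev--type approach via Gallagher's inequality is already sufficient, while the Montgomery--Vaughan form would in fact yield the sharp constant $1$.
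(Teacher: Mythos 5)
Your argument is correct and is essentially the standard proof of the multiplicative large sieve: the paper itself gives no proof, citing Iwaniec--Kowalski, Theorem 7.13, whose derivation is exactly your route (additive large sieve over the $1/Q^2$-spaced Farey fractions, then Gauss sums $\tau(\overline{\chi})\chi(n)=\sum_{a}\overline{\chi}(a)e(an/q)$ with $|\tau(\overline{\chi})|^2=q$ and character orthogonality to reach primitive characters). No gaps; the details you sketch, including the validity of the Gauss-sum identity for all $n$ when $\chi$ is primitive, are exactly what is needed.
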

\begin{proof}
See (\cite{Iwaniec-Kowalski}, Theorem 7.13).
\end{proof}

\begin{lemma}\label{Psixchiest}
Let $q\leq\log^AX$. Then
\begin{equation*}
\Psi(X,\chi)\ll\frac{X}{e^{c(\log X)^{1/2}}}
\end{equation*}
for any nonprincipal character $\chi$ modulo $q$. Here $\Psi(X,\chi)$ is defined by \eqref{Psichi}.
\end{lemma}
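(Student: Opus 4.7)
The plan is to derive the stated bound from the standard Siegel--Walfisz machinery: for $q\leq(\log X)^A$ and any nonprincipal character $\chi$ mod $q$, the inequality is a character-level form of the prime number theorem in arithmetic progressions, and it follows from the truncated explicit formula combined with the classical zero-free region and Siegel's theorem.

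First I would start from the truncated explicit formula for $\Psi(X,\chi)$ (nonprincipal $\chi$),
\begin{equation*}
\Psi(X,\chi)=-\sum_{|\beta|\leq T}\frac{X^\rho}{\rho}+O\!\left(\frac{X\log^2(qXT)}{T}\right),\qquad 2\leq T\leq X,
\end{equation*}
as found in Davenport, Chapter 19. Next I would invoke the de la Vall\'ee--Poussin zero-free region: there is an absolute $c_1>0$ such that $L(s,\chi)\neq 0$ for $\sigma\geq 1-c_1/\log(q(|t|+2))$, with at most one exceptional simple real zero $\tilde\beta$ attached to a real character. Choosing $T=\exp(\tfrac12\sqrt{\log X})$ and using $q\leq(\log X)^A$, we obtain $\log(qT)\ll\sqrt{\log X}$, so every non-exceptional zero with $|\beta|\leq T$ satisfies $\beta\leq 1-c_2/\sqrt{\log X}$. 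Combined with the standard estimate $\sum_{|\beta|\leq T}1/|\rho|\ll\log^2(qT)$ (Riemann--von Mangoldt), this gives
\begin{equation*}
\sum_{\rho\neq\tilde\beta,\,|\beta|\leq T}\frac{X^\rho}{|\rho|}\ll X\exp\!\left(-c_3\sqrt{\log X}\right).
\end{equation*}

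The delicate step is handling the possible Siegel zero $\tilde\beta$. By Siegel's theorem, for every $\varepsilon>0$ there is an (ineffective) $c(\varepsilon)>0$ such that $\tilde\beta\leq 1-c(\varepsilon)q^{-\varepsilon}$. Taking $\varepsilon=1/(2A)$, the hypothesis $q\leq(\log X)^A$ yields $q^\varepsilon\leq(\log X)^{1/2}$, whence $X^{\tilde\beta}\ll X\exp(-c(\varepsilon)\sqrt{\log X})$. Adding the two contributions and noting that the error term from the explicit formula, $X\log^2(qXT)/T$, is also $\ll X\exp(-c_4\sqrt{\log X})$ for our choice of $T$, produces the claimed estimate. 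The main obstacle is precisely the Siegel zero; it is also the reason the implied constant on the right-hand side must be ineffective, which in turn is why the statement is confined to the Siegel--Walfisz range $q\leq(\log X)^A$ rather than extended to larger moduli.
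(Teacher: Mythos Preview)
Your argument is correct and is precisely the standard Siegel--Walfisz proof that the paper invokes by simply citing Davenport, p.~132; the paper gives no details beyond that reference, so you have faithfully reconstructed the intended argument. One minor notational slip: in the paper's convention $\rho=\alpha+i\beta$, so when you write that each non-exceptional zero with $|\beta|\leq T$ satisfies ``$\beta\leq 1-c_2/\sqrt{\log X}$'', the second occurrence should be $\alpha$.
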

\begin{proof} See (\cite{Davenport}, p. 132).
\end{proof}

\begin{lemma}\label{PsiXchiformula}
Let $\chi$ is primitive character to modulus $r>1$. If $1\leq T\leq X$ then
\begin{equation*}
\Psi(X,\chi)=-\sum\limits_{|\beta|\leq T}\frac{X^\rho}{\rho}+\sum\limits_{|\beta|<1}\frac{1}{\rho}+\mathcal{O}\bigg(\frac{X\log^2(rX)}{T}\bigg)\,.
\end{equation*}
Here $\Psi(X,\chi)$ is defined by \eqref{Psichi}.
\end{lemma}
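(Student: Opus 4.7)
The plan is to derive the explicit formula by the classical Perron contour method, as in Davenport's \emph{Multiplicative Number Theory}, Ch.~19.

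First, starting from the Dirichlet series $-L'(s,\chi)/L(s,\chi)=\sum_n \Lambda(n)\chi(n)n^{-s}$, valid for $\Re s>1$, I would apply the truncated Perron formula with $b=1+(\log X)^{-1}$ to obtain
\begin{equation*}
\Psi(X,\chi)=\frac{1}{2\pi i}\int_{b-iT}^{b+iT}\left(-\frac{L'(s,\chi)}{L(s,\chi)}\right)\frac{X^{s}}{s}\,ds+\mathcal{O}\!\left(\frac{X\log^{2}(rX)}{T}\right),
\end{equation*}
using $\sum_n \Lambda(n)n^{-b}\ll \log X$ to control the Perron error.

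Next I would shift the contour from $\Re s=b$ to $\Re s=-U$ with $U\to\infty$. Since $\chi$ is primitive of modulus $r>1$, $L(s,\chi)$ is entire, so there is no pole at $s=1$. Inside the rectangle the integrand has poles at each non-trivial zero $\rho$ with $|\beta|\le T$ (contributing residue $-X^{\rho}/\rho$ and producing the first sum in the statement), at $s=0$, and at the trivial zeros in $\Re s<0$, which together contribute $\mathcal{O}(1)$. To identify the residue at $s=0$ I would use the Hadamard partial-fraction expansion
\begin{equation*}
\frac{L'(s,\chi)}{L(s,\chi)}=B(\chi)-\tfrac{1}{2}\log\!\tfrac{r}{\pi}-\tfrac{1}{2}\frac{\Gamma'}{\Gamma}\!\left(\tfrac{s+a}{2}\right)+\sum_{\rho}\left(\frac{1}{s-\rho}+\frac{1}{\rho}\right),
\end{equation*}
where $a\in\{0,1\}$ encodes the parity of $\chi$. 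Substituting at $s=0$ produces a term $\sum_{\rho}1/\rho$; splitting this sum at $|\beta|=1$, the tail $\sum_{|\beta|\ge 1}1/\rho$ is $\mathcal{O}(\log^{2}r)$ by the zero-counting bound $N(T+1,\chi)-N(T,\chi)\ll\log(r(|T|+2))$ and can be absorbed into the error term, leaving exactly $\sum_{|\beta|<1}1/\rho$ in the main formula.

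Finally, I would estimate the horizontal and left-vertical contour segments. Choosing $T$ (by a small perturbation) to avoid ordinates of zeros, the standard bound $|L'(\sigma\pm iT,\chi)/L(\sigma\pm iT,\chi)|\ll \log^{2}(r(|T|+2))$ on the horizontal segments $-U\le\sigma\le b$ yields a contribution of $\mathcal{O}(X\log^{2}(rX)/T)$; the left-vertical integral tends to $0$ as $U\to\infty$ using the functional equation and polynomial growth of $L'/L$ in the left half-plane. The trivial-zero residues contribute $\mathcal{O}(1)$ and are similarly absorbed.

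The main obstacle is the bookkeeping at $s=0$: tracking exactly which constants arise (the $B(\chi)$, $\Gamma$-factor, and $\log(r/\pi)$ pieces must cancel or be absorbed), and carefully splitting $\sum_{\rho}1/\rho$ so that only $\sum_{|\beta|<1}1/\rho$ survives as a main term while the rest fits under $\mathcal{O}(X\log^{2}(rX)/T)$. The contour estimates themselves are standard.
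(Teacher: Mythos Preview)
Your sketch is correct and is precisely the classical argument from Davenport, \emph{Multiplicative Number Theory}, Ch.~19, which is exactly what the paper cites as its proof of this lemma. There is nothing to add: the paper does not give an independent proof, and your outline matches the cited source.
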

\begin{proof} See (\cite{Davenport}, Ch. 19).
\end{proof}

\begin{lemma}\label{Montgomery}
If $Q\geq1$ and $T\geq2$ then
\begin{equation*}
\sum\limits_{q\le Q}\sideset{}{^*}\sum\limits_{\chi(q)}
N(\sigma,T,\chi)\ll\begin{cases}(Q^2T)^{\frac{3(1-\sigma)}{2-\sigma}}(\log QT)^9
\quad\mbox{if}\quad \frac{1}{2}\leq\sigma\leq\frac{4}{5} \,,\\
(Q^2T)^{\frac{2(1-\sigma)}{\sigma}}(\log QT)^{14}\quad\mbox{if}\quad \frac{4}{5}\leq\sigma\leq1\,.
\end{cases}
\end{equation*}
\end{lemma}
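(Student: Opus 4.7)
The approach is a Barban--Davenport--Halberstam variance argument combined with the Piatetski--Shapiro identity $\mathbf{1}_{\{n=[k^{1/\gamma}]\}}=(n+1)^\gamma-n^\gamma+\psi(-(n+1)^\gamma)-\psi(-n^\gamma)$. First I would expand the inner sum over $n\equiv a\,(q)$ by Dirichlet characters and apply orthogonality to reduce the left-hand side of the theorem to
\[
\sum_{q\le Q}\frac{|T_{\chi_0}-\gamma J|^2}{\varphi(q)}+\sum_{q\le Q}\frac{1}{\varphi(q)}\sum_{\chi\ne\chi_0(q)}|T_\chi|^2,
\]
where $J=\int_{\mu X}^Xe(ty^c)\,dy$ and $T_\chi=\sum_{\mu X<n\le X,\,n=[k^{1/\gamma}]}\Lambda(n)\chi(n)n^{1-\gamma}e(tn^c)$. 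Substituting the identity together with the Taylor expansion $n^{1-\gamma}[(n+1)^\gamma-n^\gamma]=\gamma+O(n^{-1})$ yields $T_\chi=\gamma U_\chi+V_\chi+O(\log X)$, where $U_\chi=\sum\Lambda(n)\chi(n)e(tn^c)$ and $V_\chi=\sum\Lambda(n)\chi(n)n^{1-\gamma}[\psi(-(n+1)^\gamma)-\psi(-n^\gamma)]e(tn^c)$.

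For the principal character, the hypothesis $|t|\le X^{(4\gamma-3c-1)/3-\delta}$ lies inside both the range of Lemma \ref{SIasympt} (since $(4\gamma-3c-1)/3<1-c$ for $\gamma<1$) and the range of Lemma \ref{Kumchev} (since $(4\gamma-3c-1)/3\le\gamma-c$). Hence $U_{\chi_0}=J+O(X\exp(-c_1(\log X)^{1/5}))$, and Abel summation against the weight $n^{1-\gamma}$ applied uniformly to Lemma \ref{Kumchev} gives $V_{\chi_0}\ll X^{1-\gamma}\cdot X^{11/12+\delta'}=X^{23/12-\gamma+\delta'}$. Squaring and summing with weight $\varphi(q)^{-1}$ over $q\le Q$ via $\sum_{q\le Q}\varphi(q)^{-1}\ll\log Q$, the hypothesis $\gamma>11/12$ yields the strict inequality $23/6-2\gamma+2\delta'<2$; combined with $Q\ge X^\gamma(\log X)^{-A}$ this produces the acceptable contribution $\ll X^{2-\gamma}Q\log X$ from the principal character.

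For the non-principal characters I reduce to primitive $\chi$ modulo $r>1$ and invoke the explicit formula of Lemma \ref{PsiXchiformula}. Under GRH every nontrivial zero has $\alpha=1/2$, and Lemma \ref{Irhohtest} gives $|I_\rho(h,t)|\ll X^{1/2}/\max(\sqrt{|t|X^c+|h|X^\gamma},|\beta|)$. Abel summation against $e(tu^c)$ together with Riemann--von Mangoldt counting of zeros produce $|U_\chi|\ll X^{1/2}\log^2(rX)$. For $V_\chi$ I apply Vaaler's Lemma \ref{Vaaler} with parameter $H=X^{(1-\gamma)/2}(\log X)^{-3/2}$ and use the identity
\[
e(-h(n+1)^\gamma)-e(-hn^\gamma)=-2\pi ih\gamma\int_n^{n+1}u^{\gamma-1}e(-hu^\gamma)\,du
\]
to reduce each $h$-piece (via the explicit formula) to $2\pi h\gamma\sum_\rho I_\rho(h,t)$, bounded by $|h|X^{1/2}\log^2(rX)$ through Lemma \ref{Irhohtest} and the zero-sum. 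The Vaaler weights $a_h\ll 1/|h|$ exactly cancel the factor $|h|$, and summation over $|h|\le H$ yields $V_\chi\ll HX^{1/2}\log^2(rX)$; the Vaaler tail (the $b_h$-part) is of the same order by an analogous computation. Assembling,
\[
\sum_{q\le Q}\frac{1}{\varphi(q)}\sum_{\chi\ne\chi_0}|T_\chi|^2\ll QH^2X\log^4(QX)\ll X^{2-\gamma}Q\log X.
\]

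The main technical obstacle is the calibration of Vaaler's parameter $H$ for $V_\chi$: it must be large enough that the Vaaler truncation error stays absorbed into the target bound, yet small enough that after combining the weights $a_h\ll 1/|h|$ with the factor $|h|$ from the mean-value identity, the estimate $HX^{1/2}\log^2$ squares to $X^{2-\gamma}\log X$. The precise exponent $(4\gamma-3c-1)/3$ in the hypothesis on $|t|$ is calibrated so that the dichotomy in Lemma \ref{Irhohtest} splits favourably for all $|h|\le H$ with this choice of $H$, ensuring uniform square-root cancellation in the zero-sum bound for $\sum_\rho I_\rho(h,t)$.
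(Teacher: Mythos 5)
Your proposal does not address the statement at all. The statement to be proved is Lemma \ref{Montgomery}, a zero-density estimate: a bound for
\begin{equation*}
\sum\limits_{q\le Q}\sideset{}{^*}\sum\limits_{\chi(q)} N(\sigma,T,\chi)\,,
\end{equation*}
the number of zeros $\rho=\alpha+i\beta$ of $L(s,\chi)$ with $\sigma\leq\alpha\leq1$, $|\beta|\leq T$, summed over primitive characters to moduli $q\le Q$, with the exponent $\frac{3(1-\sigma)}{2-\sigma}$ in the range $\frac{1}{2}\leq\sigma\leq\frac{4}{5}$ and $\frac{2(1-\sigma)}{\sigma}$ in the range $\frac{4}{5}\leq\sigma\leq1$. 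What you have written is instead a sketch of the deduction of Theorem \ref{Theorem3} (the Barban--Davenport--Halberstam variance bound for exponential sums over Piatetski-Shapiro primes): character orthogonality, the Piatetski-Shapiro indicator identity, Vaaler's lemma, the explicit formula, and the splitting into principal and non-principal characters. None of this touches the quantity $N(\sigma,T,\chi)$, and no step of your argument detects or counts zeros of $L$-functions, so nothing in the proposal constitutes a proof of the lemma.

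A genuine proof of this lemma requires the classical zero-detection machinery: representing a hypothetical zero by a Dirichlet polynomial relation (zero-detecting polynomials), then applying the large sieve and mean-value theorems for Dirichlet polynomials (together with Hal\'asz's method for the range $\sigma\geq\frac{4}{5}$, which is where the second exponent and the larger power of the logarithm come from). This is Montgomery's Theorem 12.2, and the paper proves the lemma simply by citing it; if you wished to supply a self-contained argument you would have to reproduce that zero-density proof, not the variance computation you have given. Note also that the circularity would be problematic in the other direction: the paper uses Lemma \ref{Montgomery} (through Lemma \ref{GRH}) as an ingredient in the proofs of the theorems, so a deduction of the lemma from Theorem \ref{Theorem3}-type statements is not available even in outline.
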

\begin{proof} See (\cite{Montgomery}, Theorem 12.2).
\end{proof}

\begin{lemma}\label{GRH}
Assume the GRH. Let $Q\geq 1$, $T\geq2$ and
\begin{equation}\label{LTQ}
L(T, Q)=\sum\limits_{q\le Q}\sideset{}{^*}\sum\limits_{\chi(q)}\sum\limits_{|\beta|\leq T}1\,.
\end{equation}
Then
\begin{equation*}
L(T, Q)\ll Q^2 T (\log QT)^9\,.
\end{equation*}
\end{lemma}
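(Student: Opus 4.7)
The plan is to reduce the counting function $L(T,Q)$ to a sum of Montgomery zero-density values $N(\sigma, T, \chi)$ on the critical line and then invoke Lemma \ref{Montgomery}. Under the Generalized Riemann Hypothesis every non-trivial zero $\rho = \alpha + i\beta$ of each Dirichlet $L$-function $L(s,\chi)$ satisfies $\alpha = 1/2$. In view of the definition \eqref{LTQ}, the inner sum $\sum_{|\beta|\leq T} 1$ is then precisely the number of non-trivial zeros of $L(s,\chi)$ in the rectangle $1/2 \leq \alpha \leq 1$, $|\beta|\leq T$, which is exactly $N(1/2, T, \chi)$. Consequently,
\begin{equation*}
L(T,Q) = \sum_{q \le Q}\sideset{}{^*}\sum_{\chi(q)} N(1/2, T, \chi).
\end{equation*}

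Now I would apply Lemma \ref{Montgomery} at $\sigma = 1/2$, which lies in the first range $1/2 \le \sigma \le 4/5$. The exponent there reduces to $\frac{3(1-1/2)}{2-1/2} = 1$, and the lemma yields at once
\begin{equation*}
\sum_{q \le Q}\sideset{}{^*}\sum_{\chi(q)} N(1/2, T, \chi) \ll Q^{2}T (\log QT)^{9},
\end{equation*}
which is exactly the stated bound.

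There is no real obstacle here: the only substantive content of the argument is the GRH-driven collapse of the zero count to a single vertical line, after which Montgomery's zero-density estimate supplies the desired inequality without further work. One could equally well appeal to the unconditional Riemann--von Mangoldt type formula $\sum_{|\beta|\le T} 1 \ll T\log(qT)$ per primitive character and sum trivially over $q\le Q$ and $\chi$, but since Lemma \ref{Montgomery} is already in hand, the cleanest route is the one above.
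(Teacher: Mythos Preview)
Your proposal is correct and follows exactly the same route as the paper: under GRH the inner zero count becomes $N(1/2,T,\chi)$, and then Lemma~\ref{Montgomery} at $\sigma=1/2$ gives the bound $Q^2T(\log QT)^9$. The paper's proof is the one-line version of what you wrote, with your explicit computation of the exponent and the aside about the Riemann--von Mangoldt alternative being additional (harmless) commentary.
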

\begin{proof}
From \eqref{LTQ} and Lemma \ref{Montgomery} it follows
\begin{equation*}
L(T, Q)=\sum\limits_{q\le Q}\sideset{}{^*}\sum\limits_{\chi(q)}N\big(1/2,T,\chi\big)\ll Q^2 T (\log QT)^9\,.
\end{equation*}
\end{proof}

\begin{lemma}\label{SumPsixchit2}
Let  $0<\mu<1$, $c>1$, $B>0$, $C>0$ and $|t|\leq X^{\frac{2}{3}-c-\delta}$ for a sufficiently small $\delta>0$. Then
\begin{equation*}
\sum\limits_{1<q\le \log^CX}\frac{1}{\varphi(q)}\sideset{}{^*}\sum\limits_{\chi(q)}\Bigg|\sum\limits_{\mu X<n\leq X}\Lambda(n)\chi(n)e(t n^c)\Bigg|\ll \frac{X}{\log^BX}\,.
\end{equation*}
\end{lemma}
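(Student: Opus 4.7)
The plan is to split the range of $|t|$ into two subranges at the threshold $X^{-c}(\log X)^D$, where $D$ is chosen in terms of $B$, and handle each by a different tool. Set $D=2B+34$.

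For small $|t|$, namely $|t|\le X^{-c}(\log X)^D$, the exponential factor $e(tn^c)$ barely oscillates: the phase varies by at most $(\log X)^D$ across the summation range. I would apply Abel summation to separate $e(tn^c)$ from $\Lambda(n)\chi(n)$, so the inner sum is expressed in terms of partial sums $\Psi(y,\chi)-\Psi(\mu X,\chi)$ for $y\in[\mu X,X]$. Since $q\le\log^C X$ and $\chi$ is primitive modulo $q>1$ (hence nonprincipal), Lemma \ref{Psixchiest} gives $\Psi(y,\chi)\ll y\,e^{-c_1\sqrt{\log y}}$ for some absolute $c_1>0$. The derivative of $e(ty^c)$ contributes an extra factor $2\pi|t|cy^{c-1}$, so the integral in Abel's identity is bounded by $O(|t|X^c\cdot Xe^{-c_1\sqrt{\log X}})=O((\log X)^D\cdot Xe^{-c_1\sqrt{\log X}})$, which is $o(X/\log^N X)$ for any fixed $N$. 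Summing trivially over $q$ and $\chi$, using $\tfrac{1}{\varphi(q)}\sideset{}{^*}\sum_{\chi(q)}1\le 1$ so that the outer sum has at most $\log^C X$ nonzero terms, yields the required bound $X/\log^B X$ with ample room.

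For larger $|t|$, that is $X^{-c}(\log X)^D\le|t|\le X^{2/3-c-\delta}$, I would apply Lemma \ref{SumPsixchit1} directly. Taking $\xi=2/3-\delta$ and choosing $\delta_1=\delta/3$, one verifies $\xi+7\delta_1<2$ and $3\xi+6\delta_1=2-\delta<2$, so the hypotheses hold. With $Q=X^{\delta_1}$, Lemma \ref{SumPsixchit1} gives
\[
\sum_{1<q\le X^{\delta_1}}\frac{1}{\varphi(q)}\sideset{}{^*}\sum_{\chi(q)}\Bigl|\sum_{\mu X<n\le X}\Lambda(n)\chi(n)e(tn^c)\Bigr|\ll \frac{X}{(\log X)^{D/2-17}}=\frac{X}{\log^B X}.
\]
Since $\log^C X\le X^{\delta_1}$ for $X$ sufficiently large, the sum restricted to $1<q\le\log^C X$ is dominated by the above, finishing this case.

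There is no serious obstacle: the argument is essentially a case split dictated by the range restriction in Lemma \ref{SumPsixchit1}, which requires $|t|\ge X^{-c}(\log X)^D$. The only points to check are that the parameter inequalities in Lemma \ref{SumPsixchit1} can be met when $\xi$ is close to $2/3$ (which forces $\delta_1<\delta/2$ roughly, but any such small positive $\delta_1$ works, since $\log^C X<X^{\delta_1}$ eventually) and that $D=2B+34$ converts the saving $(\log X)^{-(D/2-17)}$ into $(\log X)^{-B}$.
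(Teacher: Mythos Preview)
Your proposal is correct and follows essentially the same approach as the paper's proof: a case split at the threshold $|t|=X^{-c}(\log X)^{2B+34}$, handling small $|t|$ via Abel summation together with Lemma~\ref{Psixchiest}, and large $|t|$ by invoking Lemma~\ref{SumPsixchit1}. You are in fact more careful than the paper in verifying the parameter constraints $\xi+7\delta_1<2$ and $3\xi+6\delta_1<2$ needed for Lemma~\ref{SumPsixchit1}, and in noting that $\log^C X\le X^{\delta_1}$ eventually.
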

\begin{proof}
We consider two cases.

\textbf{Case 1} 

\begin{equation}\label{tXcB}
|t|\leq X^{-c}(\log X)^{2B+34}\,.
\end{equation}
From \eqref{tXcB}, Abel's summation formula and Lemma \ref{Psixchiest} we get
\begin{align*}
&\sum\limits_{1<q\le \log^CX}\frac{1}{\varphi(q)}\sideset{}{^*}\sum\limits_{\chi(q)}\Bigg|\sum\limits_{\mu X<n\leq X}\Lambda(n)\chi(n)e(t n^c)\Bigg|\nonumber\\
&=\sum\limits_{1<q\le \log^CX}\frac{1}{\varphi(q)}\sideset{}{^*}\sum\limits_{\chi(q)}\Bigg|e(tX^c)\sum\limits_{\mu X<n\leq X}\Lambda(n)\chi(n)\nonumber\\
&\hspace{40mm}-\int\limits_{\mu X}^X\left(\sum\limits_{\mu X<n\leq y}\Lambda(n)\chi(n)\right)\frac{d}{dy}e(t y^c)\,dy\Bigg|\nonumber\\
&\ll\big(1+|t|X^c\big)\frac{X}{e^{(\log X)^{1/3}}}\ll\frac{X}{e^{(\log X)^{1/4}}}\,.
\end{align*}

\textbf{Case 2} 

\begin{equation*}
X^{-c}(\log X)^{2B+34}\leq|t|\leq X^{\frac{2}{3}-c-\delta}
\end{equation*}

This case follows from Lemma \ref{SumPsixchit1}. The lemma is proved.
 
\end{proof}

\begin{lemma}\label{SumLchigama}
Assume the GRH. Let $0<\mu<1$, $\frac{2}{5}<\gamma<1$, $B>0$ and $C>0$. Then
\begin{equation*}
\sum\limits_{1<q\le \log^CX}\frac{1}{\varphi(q)}\sideset{}{^*}\sum\limits_{\chi(q)}\Bigg|\sum\limits_{\mu X<n\leq X\atop{n=[k^{1/\gamma}]}}\Lambda(n)\chi(n)\Bigg|\ll \frac{X^\gamma}{\log^BX}\,.
\end{equation*}
\end{lemma}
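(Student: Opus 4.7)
The plan is to convert the Piatetski--Shapiro constraint into Dirichlet polynomials and bound each resulting exponential sum under GRH via the explicit formula. Starting from $n=[k^{1/\gamma}]\iff[-n^\gamma]-[-(n+1)^\gamma]=1$ and $[x]=x-\psi(x)-\tfrac12$, one has
$$\mathbf{1}_{n=[k^{1/\gamma}]}=\bigl((n+1)^\gamma-n^\gamma\bigr)+\bigl(\psi(-(n+1)^\gamma)-\psi(-n^\gamma)\bigr),$$
so the inner sum splits as $\Sigma_1(\chi)+\Sigma_2(\chi)$.

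For $\Sigma_1(\chi)$, the expansion $(n+1)^\gamma-n^\gamma=\gamma n^{\gamma-1}+O(n^{\gamma-2})$ combined with partial summation and the GRH bound $\Psi(y,\chi)\ll y^{1/2}\log^2(qy)$ (obtained from Lemma~\ref{PsiXchiformula} together with Lemma~\ref{GRH}) gives $|\Sigma_1(\chi)|\ll X^{\gamma-1/2}\log^2(qX)$; its total contribution is $\ll X^{\gamma-1/2}\log^{C+2}X$, well inside the target. For $\Sigma_2(\chi)$, I would apply Lemma~\ref{Vaaler} with truncation $H$ to replace each $\psi$ by its finite Fourier expansion. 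This reduces $\Sigma_2(\chi)$ to a linear combination, with coefficients $a(h)\ll 1/|h|$, of twisted exponential sums $\sum\Lambda(n)\chi(n)e(-h(n+j)^\gamma)$, $j\in\{0,1\}$, together with a Vaaler error that is controlled by the analogous sums without $\chi$. Each twisted sum is treated by integration by parts and the explicit formula from Lemma~\ref{PsiXchiformula}, which produces the integrals $I_\rho(h,0)$ of Lemma~\ref{Irhohtest}; under the GRH one has $\alpha=1/2$, so $|I_\rho(h,0)|\ll X^{1/2}/\sqrt{|h|X^\gamma}$ when $|\beta|\le 4\pi\gamma|h|X^\gamma$ and $|I_\rho(h,0)|\ll X^{1/2}/|\beta|$ otherwise. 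A dyadic decomposition of $|\beta|$ combined with the zero count $\sum_{q\le Q}\sum^{*}_{\chi(q)}\#\{|\beta|\le V\}\ll Q^2V\log^9(QV)$ from Lemma~\ref{GRH} then settles the zero-side contribution.

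The principal obstacle is the Vaaler error, which is the source of the requirement $\gamma>2/5$: the untwisted sums $\sum\Lambda(n)e(-hn^\gamma)$ arising from it have to be estimated using the exponential sum method for primes (of the kind underlying Lemma~\ref{SumPsixchit1}) to save a power of $X$, and the truncation $H$ must be chosen, roughly $H=X^{1-\gamma-\eta}$ for a small $\eta>0$, so as to balance the Vaaler loss $\ll X^\gamma/H$ against the zero-side estimate. A further technical point is preserving cancellation between the terms $e(-h(n+1)^\gamma)-e(-hn^\gamma)$: one should rewrite this difference as $-2\pi ih\gamma\int_y^{y+1}u^{\gamma-1}e(-hu^\gamma)du$ and combine the two corresponding zero integrals before invoking Lemma~\ref{Irhohtest}, since applying the triangle inequality to the two twisted sums separately would introduce an inflating factor of order $X^{1-\gamma}$ and destroy the whole balance.
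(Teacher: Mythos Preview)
Your overall architecture matches the paper's: split via $[-n^\gamma]-[-(n+1)^\gamma]$, handle the smooth part by partial summation (the paper actually uses the Siegel--Walfisz bound of Lemma~\ref{Psixchiest} here rather than GRH, but either works), expand the sawtooth part via Lemma~\ref{Vaaler}, and treat the resulting twisted sums by the explicit formula, Lemma~\ref{Irhohtest}, and the zero count of Lemma~\ref{GRH}. Your remark about preserving the cancellation in $e(-h(n+1)^\gamma)-e(-hn^\gamma)$ is correct and corresponds exactly to the paper's device $\Phi_h$.

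The gap is in the Vaaler error and the choice of $H$. Once you drop $\chi$ from the error by positivity, the $h=0$ contribution is $b(0)\sum_{\mu X<n\le X}\Lambda(n)\asymp X/H$ (times $(\log X)^{C+1}$ from the sum over $q,\chi$), not $X^\gamma/H$; nothing shrinks this to $X^\gamma/H$. Forcing $X/H\ll X^\gamma/\log^B X$ would require $H\gg X^{1-\gamma}$. On the other hand, your own zero-side estimate for the main Vaaler terms (after the $\Phi_h$-type saving) produces a term of size $X^{(3\gamma-1)/2}H^{3/2}$, which forces $H\ll X^{(1-\gamma)/3}$. These two constraints are incompatible, so the choice $H\approx X^{1-\gamma-\eta}$ cannot close the argument.

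The paper's fix is to \emph{keep} $\chi$ in the $h=0$ part of the Vaaler error: it isolates $b(0)\sum_n\Lambda(n)\chi(n)$ and bounds it under GRH (via Lemma~\ref{PsiXchiformula} and Lemma~\ref{GRH}) by $X^{1/2}(\log X)^{O(1)}$, so that the $h=0$ loss is only $H^{-1}X^{1/2}$. For $h\neq 0$ it drops $\chi$ and $\Lambda$ by positivity and applies the exponent pair $(\tfrac12,\tfrac12)$ of Lemma~\ref{Exponentpairs} directly to $\sum_n e(-hn^\gamma)$; no prime exponential-sum machinery of the Lemma~\ref{SumPsixchit1} type is needed. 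With this correction the balance is struck at $H\asymp (X^{1-\gamma})^{1/3}$ up to logarithms, and the condition $\gamma>\tfrac{2}{5}$ emerges from the exponent-pair term $H^{-1}X^{1-\gamma}\asymp X^{2(1-\gamma)/3}$, which must be $\ll X^\gamma$.
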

\begin{proof} We have 
\begin{align}\label{SumLchigamaest1}
&\sum\limits_{1<q\le \log^CX}\frac{1}{\varphi(q)}\sideset{}{^*}\sum\limits_{\chi(q)}\Bigg|\sum\limits_{\mu X<n\leq X\atop{n=[k^{1/\gamma}]}}\Lambda(n)\chi(n)\Bigg|\nonumber\\
&=\sum\limits_{1<q\le \log^CX}\frac{1}{\varphi(q)}\sideset{}{^*}\sum\limits_{\chi(q)}\Bigg|\sum\limits_{\mu X<n\leq X}\Lambda(n)\chi(n)\big([-n^\gamma]-[-(n+1)^\gamma]\big)\Bigg|\ll S_1+S_2\,,
\end{align}
where
\begin{align}
\label{Schi1}
&S_1=\sum\limits_{1<q\le \log^CX}\frac{1}{\varphi(q)}\sideset{}{^*}\sum\limits_{\chi(q)}\Bigg|\sum\limits_{\mu X<n\leq X}\Lambda(n)\chi(n)\big((n+1)^\gamma-n^\gamma\big)\Bigg|\,,\\
\label{Schi2}
&S_2=\sum\limits_{1<q\le \log^CX}\frac{1}{\varphi(q)}\sideset{}{^*}\sum\limits_{\chi(q)}\Bigg|\sum\limits_{\mu X<n\leq X}\Lambda(n)\chi(n)\big(\psi(-(n+1)^\gamma)-\psi(-n^\gamma)\big)\Bigg|\,.
\end{align}
Using \eqref{Schi1}, Abel's summation formula and Lemma \ref{Psixchiest} we derive
\begin{align}\label{S1est1}
S_1&=\sum\limits_{1<q\le \log^CX}\frac{1}{\varphi(q)}\sideset{}{^*}\sum\limits_{\chi(q)}\Bigg|\sum\limits_{\mu X<n\leq X}\Lambda(n)\chi(n)\Big(\gamma n^{\gamma-1}+\mathcal{O}\left(n^{\gamma-2}\right)\Big)\Bigg|\nonumber\\
&=\sum\limits_{1<q\le \log^CX}\frac{1}{\varphi(q)}\sideset{}{^*}\sum\limits_{\chi(q)}\Bigg|\gamma X^{\gamma-1}\sum\limits_{\mu X<n\leq X}\Lambda(n)\chi(n)\nonumber\\
&\hspace{40mm}+\gamma(1-\gamma)\int\limits_{\mu X}^X\left(\sum\limits_{\mu X<n\leq y}\Lambda(n)\chi(n)\right)y^{\gamma-2}\,dy+\mathcal{O}(1)\Bigg|\nonumber\\
&\ll\frac{X^\gamma}{e^{(\log X)^{1/3}}}\,.
\end{align}
Next we consider $S_2$. From \eqref{Schi2} and Lemma \ref{Vaaler} we obtain
\begin{align}\label{S2345}
S_2=S_3+S_4+S_5\,,
\end{align}
where
\begin{align}
\label{S3}
&S_3=\sum\limits_{1<q\le \log^CX}\frac{1}{\varphi(q)}\sideset{}{^*}\sum\limits_{\chi(q)}\Bigg|\sum\limits_{\mu X<n\leq X}\Lambda(n)\chi(n)\sum\limits_{1\leq|h|\leq H}a(h)\Big(e(-h(n+1)^\gamma)-e(-hn^\gamma)\Big)\Bigg|\,,\\
\label{S4}
&S_4\ll\sum\limits_{1<q\le \log^CX}\frac{1}{\varphi(q)}\sideset{}{^*}\sum\limits_{\chi(q)}\Bigg|\sum\limits_{\mu X<n\leq X}\Lambda(n)\chi(n)\sum\limits_{|h|\leq H}b(h)e(-hn^\gamma)\Bigg|\,,\\
\label{S5}
&S_5\ll\sum\limits_{1<q\le \log^CX}\frac{1}{\varphi(q)}\sideset{}{^*}\sum\limits_{\chi(q)}\Bigg|\sum\limits_{\mu X<n\leq X}\Lambda(n)\chi(n)\sum\limits_{|h|\leq H}b(h)e(-h(n+1)^\gamma)\Bigg|\,.
\end{align}
By \eqref{bh} and \eqref{S3} we deduce
\begin{equation}\label{S3est1}
S_3\ll\sum\limits_{1<q\le \log^CX}\frac{1}{\varphi(q)}\sideset{}{^*}\sum\limits_{\chi(q)}\sum\limits_{1\leq|h|\leq H}\frac{1}{|h|}\Bigg|\sum\limits_{\mu X<n\leq X}\Lambda(n)\chi(n)\Phi_h(n)e(-hn^\gamma)\Bigg|\,,
\end{equation}
where
\begin{equation*}
\Phi_h(y)=e\big(hy^\gamma-h(y+1)^\gamma\big)-1\,.
\end{equation*}
Bearing in mind the estimates
\begin{equation*}
\Phi_h(y)\ll|h|y^{\gamma-1}\,,  \quad \Phi'_h(y)\ll|h|y^{\gamma-2}
\end{equation*}
and using Abel's summation formula from \eqref{S3est1} we get
\begin{align}\label{S3est2}
S_3&\ll\sum\limits_{1<q\le \log^CX}\frac{1}{\varphi(q)}\sideset{}{^*}\sum\limits_{\chi(q)}\left(\sum\limits_{1\leq |h|\leq H}\frac{1}{|h|}\Bigg|\Phi_h(X)\sum\limits_{\mu X<n\leq X}\Lambda(n)\chi(n)e(-hn^\gamma)\Bigg|\right.\nonumber\\
&\left. \hspace{40mm}+\sum\limits_{1\leq |h|\leq H}\frac{1}{|h|}\int\limits_{\mu X}^X\Bigg|\Phi'_h(y)\sum\limits_{\mu X<n\leq y}\Lambda(n)\chi(n)e(-hn^\gamma)\Bigg|\,dy\right)\nonumber\\
&\ll X^{\gamma-1}\sum\limits_{1<q\le \log^CX}\frac{1}{\varphi(q)}\sideset{}{^*}\sum\limits_{\chi(q)}\sum\limits_{1\leq |h|\leq H}\Bigg|\sum\limits_{\mu X<n\leq X_1}\Lambda(n)\chi(n)e(-hn^\gamma)\Bigg|\,,
\end{align}
for some $X_1\in (\mu X, X]$.
Applying Abel's summation formula and Lemma \ref{PsiXchiformula} we find
\begin{align}\label{SumLambda}
&\sum\limits_{\mu X<n\leq X_1}\Lambda(n)\chi(n)e(-hn^\gamma)\nonumber\\
&=e(-h X_1^\gamma)\sum\limits_{\mu X<n\leq X_1}\Lambda(n)\chi(n)-\int\limits_{\mu X}^{X_1}\left(\sum\limits_{\mu X<n\leq t}\Lambda(n)\chi(n)\right)\frac{d}{dt}e(-h t^\gamma)\,dt\nonumber\\
&=\big(\Psi(X_1,\chi)-\Psi(\mu X,\chi)\big)e(-h X_1^\gamma)-\int\limits_{\mu X}^{X_1}\big(\Psi(t,\chi)-\Psi(\mu X,\chi)\big)\frac{d}{dt}e(-h t^\gamma)\,dt\nonumber\\
&=\left[-\sum\limits_{|\beta|\leq T}\frac{X_1^\rho-(\mu X)^\rho}{\rho}+\mathcal{O}\bigg(\frac{X\log^2X}{T}\bigg)\right]e(-h X_1^\gamma)\nonumber\\
&+\int\limits_{\mu X}^{X_1}\left[\sum\limits_{|\beta|\leq T}\frac{t^\rho-(\mu X)^\rho}{\rho}+\mathcal{O}\bigg(\frac{X\log^2X}{T}\bigg)\right]\frac{d}{dt}e(-h t^\gamma)dt\nonumber\\
&=-\sum\limits_{|\beta|\leq T}\int\limits_{\mu X}^{X_1}t^{\rho-1}e(-h t^\gamma)dt+\mathcal{O}\bigg(\frac{X\log^2X}{T}\bigg)+\mathcal{O}\bigg(|h|\frac{X^{1+\gamma}\log^2X}{T}\bigg)\nonumber\\
&=-\sum\limits_{|\beta|\leq T}I_\rho(h,0)+\mathcal{O}\bigg(|h|\frac{X^{1+\gamma}\log^2X}{T}\bigg)\,,
\end{align}
where $I_\rho(h,t)$ is defined by \eqref{Irhoht}. Now \eqref{S3est2}  and \eqref{SumLambda} give us 
\begin{equation}\label{S3est3}
S_3\ll X^{\gamma-1}\sum\limits_{1\leq |h|\leq H}\sum\limits_{q\le \log^CX}\frac{1}{\varphi(q)}\sideset{}{^*}\sum\limits_{\chi(q)}\sum\limits_{|\beta|\leq T}\big|I_\rho(h,0)\big|+T^{-1}H^2X^{2\gamma}(\log X)^{C+2}\,.
\end{equation}
Put
\begin{equation}\label{H}
H=\sqrt[3]{\frac{X^{1-\gamma}}{(\log X)^{2B+2C+22}}}\,,
\end{equation}
\begin{equation}\label{T}
T=H^2X^\gamma(\log X)^{B+C+2}\,.
\end{equation}
From \eqref{H} and \eqref{T} it is easy to see that $2\leq T\leq \mu X$. Taking into account \eqref{S3est3}, \eqref{T} and Lemma \ref{Irhohtest} we derive
\begin{equation}\label{S3est4}
S_3\ll X^{\gamma-1}(S'_3+S''_3)+\frac{X^\gamma}{\log^BX}\,,
\end{equation}
where $S'_3$ is the contribution of the summands for which $|\beta|<4\pi\gamma|h| X^\gamma$ and $S''_3$ is the contribution of the summands such that $4\pi\gamma|h| X^\gamma\leq|\beta|\leq T$.
First we estimate $S'_3$. 
Using \eqref{LTQ}, \eqref{H}, Abel's summation formula and Lemma \ref{GRH} we obtain
\begin{align}\label{S'3est}
S'_3&\ll\frac{\log X}{\sqrt{X^\gamma}}\sum\limits_{1\leq |h|\leq H}\frac{1}{\sqrt{|h|}}\sum\limits_{q\le \log^CX}\frac{1}{q}\sideset{}{^*}\sum\limits_{\chi(q)}\sum\limits_{|\beta|\leq4\pi\gamma|h| X^\gamma}X^\frac{1}{2}\nonumber\\
&\ll(\log X)^2X^\frac{1-\gamma}{2}\sum\limits_{1\leq |h|\leq H}\frac{1}{\sqrt{|h|}} \max_{1\leq R \leq\log^CX } \big(R^{-1}L(4\pi\gamma|h| X^\gamma, R)\big)\nonumber\\
&\ll(\log X)^{11}X^\frac{1-\gamma}{2}\sum\limits_{1\leq |h|\leq H}\frac{1}{\sqrt{|h|}} \max_{1\leq R \leq\log^CX } \Big(R|h| X^\gamma\Big)\nonumber\\
&\ll X^\frac{1+\gamma}{2}H^\frac{3}{2}(\log X)^{C+11}\ll\frac{X}{\log^BX}\,.
\end{align}
Next we consider $S''_3$. From \eqref{LTQ}, \eqref{H}, Abel's summation formula and Lemma \ref{GRH} it follows
\begin{align}\label{S''3est}
S''_3&\ll(\log X)\sum\limits_{1\leq |h|\leq H}\sum\limits_{q\le \log^CX}\frac{1}{q}\sideset{}{^*}\sum\limits_{\chi(q)}\sum\limits_{4\pi\gamma|h| X^\gamma\leq|\beta|\leq T}\frac{X^\frac{1}{2}}{|\beta|}\nonumber\\
&\ll(\log X)^3X^\frac{1}{2}\sum\limits_{1\leq |h|\leq H} \max_{1\leq R \leq\log^CX } \max_{4\pi\gamma|h| X^\gamma\leq K\leq T}\big((RK)^{-1}L(K, R)\big)\nonumber\\
&\ll(\log X)^{12}X^\frac{1}{2}\sum\limits_{1\leq |h|\leq H} \max_{1\leq R \leq\log^CX } R\ll X^\frac{1}{2}H(\log X)^{C+12}\ll\frac{X}{\log^BX}\,.
\end{align}
Now  \eqref{S3est4}, \eqref{S'3est}  and \eqref{S''3est} yield 
\begin{equation}\label{S3est5}
S_3\ll \frac{X^\gamma}{\log^BX}\,.
\end{equation}
It remains to estimate the sums $S_4$ and $S_5$. By \eqref{bh} and \eqref{S4} we get
\begin{align}\label{S4est1}
S_4&\ll\sum\limits_{1<q\le \log^CX}\frac{1}{\varphi(q)}\sideset{}{^*}\sum\limits_{\chi(q)}\Bigg|b(0)\sum\limits_{\mu X<n\leq X}\Lambda(n)\chi(n)\nonumber\\
&\hspace{60mm}+\sum\limits_{\mu X<n\leq X}\Lambda(n)\chi(n)\sum\limits_{1\leq|h|\leq H}b(h)e(-hn^\gamma)\Bigg|\nonumber\\
&\ll H^{-1}S'_4+(\log X)^{C+1}S''_4\,,
\end{align}
where
\begin{align}
\label{S'4}
&S'_4=\sum\limits_{1<q\le \log^CX}\frac{1}{\varphi(q)}\sideset{}{^*}\sum\limits_{\chi(q)}\Bigg|\sum\limits_{\mu X<n\leq X}\Lambda(n)\chi(n)\Bigg|\,,\\
\label{S''4}
&S''_4=\sum\limits_{1\leq|h|\leq H}b(h)\sum\limits_{\mu X<n\leq X}e(-hn^\gamma)\,.
\end{align}
Using consistently \eqref{LTQ}, \eqref{T}, \eqref{S'4}, Lemma \ref{PsiXchiformula}, Abel's summation formula and Lemma \ref{GRH} we find
\begin{align}\label{S'4est}
&S'_4=\sum\limits_{1<q\le \log^CX}\frac{1}{\varphi(q)}\sideset{}{^*}\sum\limits_{\chi(q)}\big|\Psi(X,\chi)-\Psi(\mu X,\chi)\big|\nonumber\\
&=\sum\limits_{1<q\le \log^CX}\frac{1}{\varphi(q)}\sideset{}{^*}\sum\limits_{\chi(q)}\left|\sum\limits_{|\beta|\leq T}\frac{X^\rho-(\mu X)^\rho}{\rho}+\mathcal{O}\bigg(\frac{X\log^2X}{T}\bigg)\right|\nonumber\\
&\ll(\log X)\sum\limits_{1<q\le \log^CX}\frac{1}{q}\sideset{}{^*}\sum\limits_{\chi(q)}\sum\limits_{|\beta|\leq T}\frac{X^\frac{1}{2}}{|\beta|}+T^{-1}X(\log X)^{C+2}\nonumber\\
&\ll(\log X)^3X^\frac{1}{2} \max_{1\leq R \leq\log^CX } \max_{1\leq K\leq T}\big((RK)^{-1}L(K, R)\big)+T^{-1}X(\log X)^{C+2}\nonumber\\
&\ll(\log X)^{12}X^\frac{1}{2} \max_{1\leq R \leq\log^CX } R+T^{-1}X(\log X)^{C+2}\nonumber\\
&\ll X^\frac{1}{2}(\log X)^{C+12}+T^{-1}X(\log X)^{C+2}\,.
\end{align}
On the other hand \eqref{bh}, \eqref{H}, \eqref{S''4} and Lemma \ref{Exponentpairs} with exponent pair $\left(\frac{1}{2},\frac{1}{2}\right)$ imply
\begin{align}\label{S''4est}
S''_4&\ll(\log X)\sum\limits_{1\leq|h|\leq H}|b(h)|\Big(|h|^\frac{1}{2}X^\frac{\gamma}{2}+|h|^{-1}X^{1-\gamma}\Big)\nonumber\\
&\ll (\log X)H^{-1}\sum\limits_{1\leq|h|\leq H}\Big(|h|^\frac{1}{2}X^\frac{\gamma}{2}+|h|^{-1}X^{1-\gamma}\Big)\nonumber\\
&\ll \Big(H^\frac{1}{2}X^\frac{\gamma}{2}+H^{-1}X^{1-\gamma}\Big)\log^2X\ll \frac{X^\gamma}{(\log X)^{B+C+1}}\,.
\end{align}
Bearing in mind \eqref{H}, \eqref{T}, \eqref{S4est1}, \eqref{S'4est} and \eqref{S''4est} we obtain
\begin{equation}\label{S4est2}
S_4\ll  \frac{X^\gamma}{\log^BX}\,.
\end{equation}
In the same way for the sum defined by \eqref{S5} we get
\begin{equation}\label{S5est}
S_5\ll  \frac{X^\gamma}{\log^BX}\,.
\end{equation}
Summarizing \eqref{SumLchigamaest1}, \eqref{S1est1}, \eqref{S2345}, \eqref{S3est5}, \eqref{S4est2} and \eqref{S5est} we establish the statement it the lemma.
\end{proof}

\begin{lemma}\label{SumLexpchigama}
Assume the GRH. Let $0<\mu<1$, $\frac{2}{5}<\gamma<1<c<3$, $c\neq2$, $B>0$, $C>0$ and 
\begin{equation}\label{tlimits}
|t|\leq X^{\frac{4\gamma-3c-1}{3}-\delta}
\end{equation}
for a sufficiently small $\delta>0$. Then
\begin{equation*}
\sum\limits_{1<q\le \log^CX}\frac{1}{\varphi(q)}\sideset{}{^*}\sum\limits_{\chi(q)}\Bigg|\sum\limits_{\mu X<n\leq X\atop{n=[k^{1/\gamma}]}}\Lambda(n)\chi(n)e(t n^c)\Bigg|\ll \frac{X^\gamma}{\log^BX}\,.
\end{equation*}
\end{lemma}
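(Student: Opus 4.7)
The plan is to follow the argument of Lemma \ref{SumLchigama}, augmenting every character sum with the factor $e(tn^c)$. Starting from the identity $n=[k^{1/\gamma}]\Longleftrightarrow[-n^\gamma]-[-(n+1)^\gamma]=1$, I decompose the inner sum into the smooth piece
$$\sum_{\mu X<n\le X}\Lambda(n)\chi(n)e(tn^c)\bigl((n+1)^\gamma-n^\gamma\bigr)$$
and the $\psi$-piece
$$\sum_{\mu X<n\le X}\Lambda(n)\chi(n)e(tn^c)\bigl(\psi(-(n+1)^\gamma)-\psi(-n^\gamma)\bigr),$$
giving rise to two contributions $S_1^\star$ and $S_2^\star$ paralleling \eqref{Schi1}--\eqref{Schi2}. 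For $S_1^\star$, Abel's summation extracts the weight $\gamma n^{\gamma-1}+O(n^{\gamma-2})$ and reduces matters to estimating
$$X^{\gamma-1}\sum_{1<q\le\log^CX}\frac{1}{\varphi(q)}\sum^*_{\chi(q)}\Big|\sum_{\mu X<n\le X_1}\Lambda(n)\chi(n)e(tn^c)\Big|,$$
which is handled by Lemma \ref{SumPsixchit2} in the subrange $|t|\le X^{2/3-c-\delta}$, and by Lemma \ref{PsiXchiformula} together with Lemma \ref{Irhohtest} (at $h=0$) and Lemma \ref{GRH} in the wider range permitted when $\gamma>3/4$. For $S_2^\star$, Vaaler's Lemma \ref{Vaaler} produces three subsums $S_3^\star,S_4^\star,S_5^\star$ exactly as in \eqref{S3}--\eqref{S5}, each now carrying the additional factor $e(tn^c)$ inside the $\Lambda(n)\chi(n)$-sum.

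The heart of the argument is $S_3^\star$. After the Abel step that removes the weight $\Phi_h$, the problem reduces to
$$X^{\gamma-1}\sum_{1\le|h|\le H}\sum_{q\le\log^CX}\frac{1}{\varphi(q)}\sum^*_{\chi(q)}\Big|\sum_{\mu X<n\le X_1}\Lambda(n)\chi(n)e(tn^c-hn^\gamma)\Big|.$$
Expanding the inner sum via Lemma \ref{PsiXchiformula} -- and performing Abel's summation on the oscillatory factor exactly as in \eqref{SumLambda}, but now with the extra derivative $tcy^{c-1}$ -- yields $-\sum_{|\beta|\le T}I_\rho(h,t)$ plus an error of size $O\bigl((|h|X^{1+\gamma}+|t|X^{1+c})T^{-1}\log^2X\bigr)$. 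I then apply Lemma \ref{Irhohtest} in the two regimes separated by $|\beta|=4\pi c|t|X^c+4\pi\gamma|h|X^\gamma$ and invoke the GRH zero count of Lemma \ref{GRH} in each, retaining the choices \eqref{H}--\eqref{T} of $H$ and $T$. The condition $|t|\le X^{(4\gamma-3c-1)/3-\delta}$ is tailored to keep the enlarged stationary-phase contribution $\sqrt{|t|X^c}\le X^{(4\gamma-1)/6-\delta/2}$ well under $X^\gamma$, so that both regimes are absorbed into $X^\gamma/\log^BX$.

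Finally, $S_4^\star$ and $S_5^\star$ split as in Lemma \ref{SumLchigama}: the $b(0)$ piece reduces to the $S_1^\star$ bound, and the character-free piece $\sum_{1\le|h|\le H}b(h)\sum_{\mu X<n\le X}e(tn^c-hn^\gamma)$ is estimated by Lemma \ref{Exponentpairs} with the exponent pair $(\tfrac12,\tfrac12)$ applied to $f(u)=tu^c-hu^\gamma$. This last step is the main obstacle: the $tu^c$ term of the phase produces an additional $|t|^{1/2}X^{c/2}$ contribution that must be absorbed alongside the standard $H^{1/2}X^{\gamma/2}+H^{-1}X^{1-\gamma}$ bound, and the hypothesis $|t|\le X^{(4\gamma-3c-1)/3-\delta}$ is precisely what makes this possible while $H$ keeps the value \eqref{H} from Lemma \ref{SumLchigama}. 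Collecting the estimates for $S_1^\star,S_3^\star,S_4^\star,S_5^\star$ yields the desired bound.
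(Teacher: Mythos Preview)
Your proposal follows the paper's skeleton (the paper calls the pieces $\Delta_1,\ldots,\Delta_5$), and the treatment of $S_3^\star$ via $I_\rho(h,t)$, Lemma~\ref{Irhohtest} and Lemma~\ref{GRH} matches the paper's $\Delta_3$ essentially verbatim. Two points differ. First, for $S_1^\star$ the paper does not case-split on the size of $t$: it applies the zero machinery (Lemmas~\ref{PsiXchiformula}, \ref{Irhohtest} at $h=0$, and~\ref{GRH}) uniformly over the whole range \eqref{tlimits}, obtaining directly the bound \eqref{Delta1est2}; your detour through Lemma~\ref{SumPsixchit2} is valid but redundant under GRH. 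Second, your formulation of the character-free piece in $S_4^\star$ is incorrect: to strip $\Lambda(n)\chi(n)$ one bounds $|\Lambda(n)\chi(n)e(tn^c)|\le\Lambda(n)$ pointwise, and the factor $e(tn^c)$ disappears together with $\chi(n)$ --- there is no way to drop $\Lambda$ and $\chi$ while retaining $e(tn^c)$. The paper's $\Delta''_4$ is therefore literally the sum $S''_4$ from Lemma~\ref{SumLchigama}, and the estimate \eqref{S''4est} carries over unchanged. Your ``main obstacle'' is thus illusory: no $|t|^{1/2}X^{c/2}$ term appears in this step, and the hypothesis \eqref{tlimits} enters only in $\Delta_1$, $\Delta_3$, and the $b(0)$ piece $\Delta'_4$ (which you correctly reduced to the $S_1^\star$ bound).
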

\begin{proof} We have 
\begin{align}\label{SumLexpchigamaest1}
&\sum\limits_{1<q\le \log^CX}\frac{1}{\varphi(q)}\sideset{}{^*}\sum\limits_{\chi(q)}\Bigg|\sum\limits_{\mu X<n\leq X\atop{n=[k^{1/\gamma}]}}\Lambda(n)\chi(n)e(t n^c)\Bigg|\nonumber\\
&=\sum\limits_{1<q\le \log^CX}\frac{1}{\varphi(q)}\sideset{}{^*}\sum\limits_{\chi(q)}\Bigg|\sum\limits_{\mu X<n\leq X}\Lambda(n)\chi(n)\big([-n^\gamma]-[-(n+1)^\gamma]\big)e(t n^c)\Bigg|\nonumber\\
&\ll\Delta_1+\Delta_2\,,
\end{align}
where
\begin{align}
\label{Delta1}
&\Delta_1=\sum\limits_{1<q\le \log^CX}\frac{1}{\varphi(q)}\sideset{}{^*}\sum\limits_{\chi(q)}\Bigg|\sum\limits_{\mu X<n\leq X}\Lambda(n)\chi(n)\big((n+1)^\gamma-n^\gamma\big)e(t n^c)\Bigg|\,,\\
\label{Delta2}
&\Delta_2=\sum\limits_{1<q\le \log^CX}\frac{1}{\varphi(q)}\sideset{}{^*}\sum\limits_{\chi(q)}\Bigg|\sum\limits_{\mu X<n\leq X}\Lambda(n)\chi(n)\big(\psi(-(n+1)^\gamma)-\psi(-n^\gamma)\big)e(t n^c)\Bigg|\,.
\end{align}
By \eqref{Delta1} and Abel's summation formula we derive
\begin{align}\label{Delta1est1}
\Delta_1&=\sum\limits_{1<q\le \log^CX}\frac{1}{\varphi(q)}\sideset{}{^*}\sum\limits_{\chi(q)}\Bigg|\sum\limits_{\mu X<n\leq X}\Lambda(n)\chi(n)e(t n^c)\Big(\gamma n^{\gamma-1}+\mathcal{O}\left(n^{\gamma-2}\right)\Big)\Bigg|\nonumber\\
&=\sum\limits_{1<q\le \log^CX}\frac{1}{\varphi(q)}\sideset{}{^*}\sum\limits_{\chi(q)}\Bigg|\gamma X^{\gamma-1}\sum\limits_{\mu X<n\leq X}\Lambda(n)\chi(n)e(t n^c)\nonumber\\
&\hspace{40mm}+\gamma(1-\gamma)\int\limits_{\mu X}^X\left(\sum\limits_{\mu X<n\leq y}\Lambda(n)\chi(n)e(t n^c)\right)y^{\gamma-2}\,dy+\mathcal{O}(1)\Bigg|\nonumber\\
&\ll X^{\gamma-1}\sum\limits_{1<q\le \log^CX}\frac{1}{\varphi(q)}\sideset{}{^*}\sum\limits_{\chi(q)}\Bigg|\sum\limits_{\mu X<n\leq X_1}\Lambda(n)\chi(n)e(tn^c)\Bigg|+\log^CX\,,
\end{align}
for some $X_1\in (\mu X, X]$.
Using \eqref{Irhoht}, Lemma \ref{Irhohtest}, Lemma \ref{PsiXchiformula}, Lemma \ref{GRH} and arguing as in the estimation of the sum $S_3$ in Lemma \ref{SumLchigama} we obtain
\begin{align}\label{Delta1est2}
&\sum\limits_{1<q\le \log^CX}\frac{1}{\varphi(q)}\sideset{}{^*}\sum\limits_{\chi(q)}\Bigg|\sum\limits_{\mu X<n\leq X_1}\Lambda(n)\chi(n)e(tn^c)\Bigg|\nonumber\\
&\ll \sum\limits_{q\le \log^CX}\frac{1}{\varphi(q)}\sideset{}{^*}\sum\limits_{\chi(q)}\sum\limits_{|\beta|\leq T}\big|I_\rho(0,t)\big|+T^{-1}|t|X^{c+1}(\log X)^{C+2}\nonumber\\
&\ll |t|^\frac{1}{2}X^\frac{c+1}{2}(\log X)^{C+11}+X^\frac{1}{2}(\log X)^{C+12}+T^{-1}|t|X^{c+1}(\log X)^{C+2}\,.
\end{align}
Now \eqref{H}, \eqref{T}, \eqref{tlimits}, \eqref{Delta1est1} and \eqref{Delta1est2} lead to
\begin{equation}\label{Delta1est3}
\Delta_1\ll  \frac{X^\gamma}{\log^BX}\,.
\end{equation}
Next we estimate $\Delta_2$. From \eqref{Delta2} and Lemma \ref{Vaaler} we deduce
\begin{align}\label{Delta2345}
\Delta_2=\Delta_3+\Delta_4+\Delta_5\,,
\end{align}
where
\begin{align}
\label{Delta3}
&\Delta_3=\sum\limits_{1<q\le \log^CX}\frac{1}{\varphi(q)}\sideset{}{^*}\sum\limits_{\chi(q)}\Bigg|\sum\limits_{\mu X<n\leq X}\Lambda(n)\chi(n)e(t n^c)\nonumber\\
&\hspace{60mm}\times\sum\limits_{1\leq|h|\leq H}a(h)\Big(e(-h(n+1)^\gamma)-e(-hn^\gamma)\Big)\Bigg|\,,\\
\label{Delta4}
&\Delta_4\ll\sum\limits_{1<q\le \log^CX}\frac{1}{\varphi(q)}\sideset{}{^*}\sum\limits_{\chi(q)}\Bigg|\sum\limits_{\mu X<n\leq X}\Lambda(n)\chi(n)e(t n^c)\sum\limits_{|h|\leq H}b(h)e(-hn^\gamma)\Bigg|\,,
\end{align}

\begin{align}
\label{Delta5}
&\Delta_5\ll\sum\limits_{1<q\le \log^CX}\frac{1}{\varphi(q)}\sideset{}{^*}\sum\limits_{\chi(q)}\Bigg|\sum\limits_{\mu X<n\leq X}\Lambda(n)\chi(n)e(t n^c)\sum\limits_{|h|\leq H}b(h)e(-h(n+1)^\gamma)\Bigg|\,.
\end{align}
For the sum $\Delta_3$ denoted by \eqref{Delta3} we proceed as in the estimation of $S_3$ in Lemma \ref{SumLchigama} to see that
\begin{align}\label{Delta3est1}
\Delta_3&\ll X^{\gamma-1}\sum\limits_{1\leq |h|\leq H}\sum\limits_{q\le \log^CX}\frac{1}{\varphi(q)}\sideset{}{^*}\sum\limits_{\chi(q)}\sum\limits_{|\beta|\leq T}\big|I_\rho(h,t)\big|\nonumber\\
&+T^{-1}HX^\gamma\big(|t|X^c+HX^\gamma\big)(\log X)^{C+2}\,,
\end{align}
where $I_\rho(h,t)$ is defined by \eqref{Irhoht}. 
Let $H$ and $T$ be defined by \eqref{H} and \eqref{T} respectively. Bearing in mind \eqref{T}, \eqref{Delta3est1} and Lemma \ref{Irhohtest} we get
\begin{equation}\label{Delta3est2}
\Delta_3\ll X^{\gamma-1}(\Delta'_3+\Delta''_3)+\frac{X^\gamma}{\log^BX}\,,
\end{equation}
where $\Delta'_3$ is the contribution of the summands for which $\beta|<4\pi c|t| X^c+4\pi\gamma|h| X^\gamma$ and $\Delta''_3$ is the contribution of the summands such that $\beta|<4\pi c|t| X^c+4\pi\gamma|h| X^\gamma\leq|\beta|\leq T$.
First we estimate $\Delta'_3$. 
Using \eqref{LTQ}, \eqref{H}, \eqref{tlimits}, Abel's summation formula and Lemma \ref{GRH} we find
\begin{align}\label{Delta'3est}
\Delta'_3&\ll(\log X)\sum\limits_{1\leq |h|\leq H}\frac{1}{\sqrt{|t| X^c+|h|X^\gamma}}\sum\limits_{q\le \log^CX}\frac{1}{q}\sideset{}{^*}\sum\limits_{\chi(q)}\sum\limits_{|\beta|<4\pi c|t| X^c+4\pi\gamma|h| X^\gamma}X^\frac{1}{2}\nonumber\\
&\ll(\log X)^2X^\frac{1-\gamma}{2}\sum\limits_{1\leq |h|\leq H}\frac{1}{\sqrt{|h|}} \max_{1\leq R \leq\log^CX } \Big(R^{-1}L\big(4\pi c|t| X^c+4\pi\gamma|h| X^\gamma, R\big)\Big)\nonumber\\
&\ll(\log X)^{11}X^\frac{1-\gamma}{2}\sum\limits_{1\leq |h|\leq H}\frac{1}{\sqrt{|h|}} \max_{1\leq R \leq\log^CX } \Big(R\big(|t| X^c+|h| X^\gamma\big)\Big)\nonumber\\
&\ll X^\frac{1+\gamma}{2}H^\frac{3}{2}(\log X)^{C+11}\nonumber\\
&\ll\frac{X}{\log^BX}\,.
\end{align}
Next we consider $\Delta''_3$. Working as in the estimation of the sum $S''_3$ in Lemma \ref{SumLchigama} from \eqref{S''3est} we obtain
\begin{equation}\label{Delta''3est}
\Delta''_3\ll\frac{X}{\log^BX}\,.
\end{equation}
By \eqref{Delta3est2}, \eqref{Delta'3est}  and \eqref{Delta''3est} it follows
\begin{equation}\label{Delta3est3}
\Delta_3\ll \frac{X^\gamma}{\log^BX}\,.
\end{equation}
It remains to estimate the sums $\Delta_4$ and $\Delta_5$. By \eqref{bh} and \eqref{Delta4} we deduce
\begin{align}\label{Delta4est1}
&\Delta_4\ll\sum\limits_{1<q\le \log^CX}\frac{1}{\varphi(q)}\sideset{}{^*}\sum\limits_{\chi(q)}\Bigg|b(0)\sum\limits_{\mu X<n\leq X}\Lambda(n)\chi(n)e(t n^c)\nonumber\\
&\hspace{60mm}+\sum\limits_{\mu X<n\leq X}\Lambda(n)\chi(n)e(t n^c)\sum\limits_{1\leq|h|\leq H}b(h)e(-hn^\gamma)\Bigg|\,,\nonumber\\
&\ll H^{-1}\Delta'_4+(\log X)^{C+1}\Delta''_4\,,
\end{align}
where
\begin{align}
\label{Delta'4}
&\Delta'_4=\sum\limits_{1<q\le \log^CX}\frac{1}{\varphi(q)}\sideset{}{^*}\sum\limits_{\chi(q)}\Bigg|\sum\limits_{\mu X<n\leq X}\Lambda(n)\chi(n)e(t n^c)\Bigg|\,,\\
\label{Delta''4}
&\Delta''_4=\sum\limits_{1\leq|h|\leq H}b(h)\sum\limits_{\mu X<n\leq X}e(-hn^\gamma)\,.
\end{align}
Using  \eqref{Delta1est2} and \eqref{Delta'4} we find
\begin{equation}\label{Delta'4est}
\Delta'_4 \ll |t|^\frac{1}{2}X^\frac{c+1}{2}(\log X)^{C+11}+X^\frac{1}{2}(\log X)^{C+12}+T^{-1}|t|X^{c+1}(\log X)^{C+2}\,.
\end{equation}
On the other hand \eqref{S''4}, \eqref{S''4est} and \eqref{Delta''4} imply
\begin{equation}\label{Delta''4est}
\Delta''_4\ll \frac{X^\gamma}{(\log X)^{B+C+1}}\,.
\end{equation}
Taking into account \eqref{H}, \eqref{T}, \eqref{tlimits}, \eqref{Delta4est1}, \eqref{Delta'4est} and \eqref{Delta''4est} we get
\begin{equation}\label{Delta4est2}
\Delta_4\ll  \frac{X^\gamma}{\log^BX}\,.
\end{equation}
In the same way for the sum defined by \eqref{Delta5} we obtain
\begin{equation}\label{Delta5est}
\Delta_5\ll  \frac{X^\gamma}{\log^BX}\,.
\end{equation}
Summarizing \eqref{SumLexpchigamaest1}, \eqref{Delta1est3}, \eqref{Delta2345}, \eqref{Delta3est3}, \eqref{Delta4est2} and \eqref{Delta5est} we establish the statement it the lemma.
\end{proof}

\section{Proof of Theorem \ref{Theorem1}}
\indent

Define
\begin{equation}\label{deltachi}
\delta(\chi)=\begin{cases}1 \quad\mbox{ if $\chi$ is principal },\\
0\quad\mbox{ otherwise }.
\end{cases}
\end{equation}
By the orthogonality of characters we have
\begin{align}\label{orthogonality}
&\sum\limits_{\mu X<n\leq X\atop{n\equiv a\, (q)}}\Lambda(n)e(t n^c)-\frac{1}{\varphi(q)}\int\limits_{\mu X}^{X}e(t y^c)\,dy\nonumber\\
&=\sum\limits_{\mu X<n\leq X}\Lambda(n)e(t n^c)\frac{1}{\varphi(q)}\sum\limits_{\chi(q)}\chi(n)\overline{\chi}(a)-\frac{1}{\varphi(q)}\int\limits_{\mu X}^{X}e(t y^c)\,dy\nonumber\\
&=\frac{1}{\varphi(q)}\sum\limits_{\chi(q)}\left(\overline{\chi}(a)\sum\limits_{\mu X<n\leq X}\Lambda(n)\chi(n)e(t n^c)-\delta(\chi)\int\limits_{\mu X}^{X}e(t y^c)\,dy\right)\,.
\end{align}
Now \eqref{Psichit}, \eqref{orthogonality} and the formula 
\begin{equation}\label{sumchi12}
\sum\limits_{a=1\atop{(a,\,q)=1}}^q \chi_1(a)\overline{\chi}_2(a)
=\begin{cases}\varphi(q)\,,\;\mbox{  if  } \; \chi_1=\chi_2\,,\\
0\,,\hspace{7mm} \mbox{  if } \; \chi_1\neq\chi_2\
\end{cases}
\end{equation}
give us
\begin{align}\label{maxLambdaInt}
&\sum\limits_{a=1\atop{(a,\,q)=1}}^q\Bigg|\sum\limits_{\mu X<n\leq X\atop{n\equiv a\, (q)}}\Lambda(n)e(t n^c)-\frac{1}{\varphi(q)}\int\limits_{\mu X}^{X}e(t y^c)\,dy\Bigg|^2\nonumber\\
&=\frac{1}{\varphi^2(q)}\sum\limits_{a=1\atop{(a,\,q)=1}}^q\left|\sum\limits_{\chi(q)}\overline{\chi}(a)\left(\Psi(X,\chi,t)-\delta(\chi)\int\limits_{\mu X}^{X}e(t y^c)\,dy\right)\right|^2\nonumber\\
&=\frac{1}{\varphi(q)}\sum\limits_{\chi(q)}\left|\left(\Psi(X,\chi,t)-\delta(\chi)\int\limits_{\mu X}^{X}e(t y^c)\,dy\right)\right|^2\,.
\end{align}
Put
\begin{equation}\label{Sigma}
\Sigma=\sum\limits_{q\le Q}\sum\limits_{a=1\atop{(a,\,q)=1}}^q\Bigg|\sum\limits_{\mu X<n\leq X\atop{n\equiv a\, (q)}}\Lambda(n)e(t n^c)-\frac{1}{\varphi(q)}\int\limits_{\mu X}^{X}e(t y^c)\,dy\Bigg|^2\,.
\end{equation}
From \eqref{Psichit}, \eqref{deltachi}, \eqref{maxLambdaInt} and \eqref{Sigma} we  obtain
\begin{equation}\label{Sigmadecomp}
\Sigma=\Sigma_1+\Sigma_2\,,
\end{equation}
where
\begin{align}
\label{Sigma1}
&\Sigma_1=\sum\limits_{q\le Q}\frac{1}{\varphi(q)}\Bigg|\sum\limits_{ \mu X<n\leq X}\Lambda(n)e(t n^c)-\int\limits_{ \mu X}^{X}e(t y^c)\,dy+\mathcal{O}\Big(\log^2X\Big)\Bigg|^2\,,\\
\label{Sigma2}
&\Sigma_2=\sum\limits_{q\le Q}\frac{1}{\varphi(q)}\sum\limits_{\chi(q)\atop{\chi\neq\chi_0}}\big|\Psi(X,\chi,t)\big|^2\,.
\end{align}
First we estimate the sum $\Sigma_1$. Using \eqref{Sigma1} and Lemma \ref{SIasympt} we get
\begin{equation}\label{Sigma1est}
\Sigma_1\ll\frac{X^2}{e^{2(\log X)^{1/5}}}\sum\limits_{q\le Q}\frac{1}{\varphi(q)}\ll\frac{X^2}{\log^AX}\ll XQ\log X\,.
\end{equation}
Next we consider the sum $\Sigma_2$. Moving to primitive characters from \eqref{Sigma2} we deduce
\begin{align}\label{Sigma2est}
\Sigma_2&\ll\sum\limits_{q\le Q}\frac{1}{\varphi(q)}\sum\limits_{r|q\atop{r>1}}\sideset{}{^*}\sum\limits_{\chi(r)}\big|\Psi(X,\chi,t)\big|^2+Q\log^4X\nonumber\\
&\ll\sum\limits_{1<r\le Q}\left(\sum\limits_{q\le Q\atop{r|q}}\frac{1}{\varphi(q)}\right)\sideset{}{^*}\sum\limits_{\chi(r)}\big|\Psi(X,\chi,t)\big|^2+Q\log^4X\nonumber\\
&\ll\sum\limits_{1<r\le Q}\frac{1}{\varphi(r)}\left(\log\frac{Q}{r}\right)\sideset{}{^*}\sum\limits_{\chi(r)}\big|\Psi(X,\chi,t)\big|^2+Q\log^4X\nonumber\\
&=\Sigma_3+\Sigma_4+Q\log^4X\,,
\end{align}
where
\begin{align}
\label{Sigma3}
&\Sigma_3=\sum\limits_{1<q\le Q_0}\frac{1}{\varphi(q)}\left(\log\frac{Q}{q}\right)\sideset{}{^*}\sum\limits_{\chi(q)}\big|\Psi(X,\chi,t)\big|^2\,,\\
\label{Sigma4}
&\Sigma_4=\sum\limits_{Q_0<q\le Q}\frac{1}{\varphi(q)}\left(\log\frac{Q}{q}\right)\sideset{}{^*}\sum\limits_{\chi(q)}\big|\Psi(X,\chi,t)\big|^2\,,\\
\label{Q0}
&Q_0=(\log X)^{A+2}\,.
\end{align}
By \eqref{Psichit} and Chebyshev’s prime number theorem it follows
\begin{equation}\label{Psiest}
\Psi(X,\chi,t)\ll X\,.
\end{equation}
Now \eqref{Sigma3}, \eqref{Q0}, \eqref{Psiest} and Lemma \ref{SumPsixchit2} with $B=A+1$ yield
\begin{equation}\label{Sigma3est}
\Sigma_3\ll X(\log X)\sum\limits_{1<q\le Q_0}\frac{1}{\varphi(q)}\sideset{}{^*}\sum\limits_{\chi(q)}\big|\Psi(X,\chi,t)\big|\ll\frac{X^2}{\log^AX}\ll XQ\log X\,.
\end{equation}
Next we estimate $\Sigma_4$. Put
\begin{equation}\label{i0}
i_0=\left[\frac{\log\frac{Q}{Q_0}}{\log2}\right]\,.
\end{equation}
From \eqref{Psichit}, \eqref{Sigma4}, \eqref{Q0}, \eqref{i0} and Lemma \ref{largesieve} we derive 
\begin{align}\label{Sigma4est}
&\Sigma_4\ll\sum\limits_{1\leq i\le i_0}\sum\limits_{\frac{Q}{2^i}<q\leq\frac{Q}{2^{i-1}}}\frac{1}{\varphi(q)}\left(\log\frac{Q}{q}\right)\sideset{}{^*}\sum\limits_{\chi(q)}\big|\Psi(X,\chi,t)\big|^2\nonumber\\
&\ll Q^{-1}\sum\limits_{1\leq i\le i_0}2^i\log2^i\sum\limits_{\frac{Q}{2^i}<q\leq\frac{Q}{2^{i-1}}}\frac{q}{\varphi(q)}\sideset{}{^*}\sum\limits_{\chi(q)}\big|\Psi(X,\chi,t)\big|^2\nonumber\\
&\ll Q^{-1}\sum\limits_{1\leq i\le i_0}2^i i\left(X+\frac{Q^2}{4^i}\right)\sum\limits_{n\leq X}\Lambda^2(n)\nonumber\\      
&\ll Q^{-1}X(\log X)\sum\limits_{1\leq i\le i_0}2^i i\left(X+\frac{Q^2}{4^i}\right)\nonumber\\     
&\ll X^2Q_0^{-1}\log^3X +XQ\log X\nonumber\\     
&\ll XQ\log X\,.                     
\end{align}
Summarizing \eqref{Sigmadecomp}, \eqref{Sigma1est}, \eqref{Sigma2est}, \eqref{Sigma3est} and \eqref{Sigma4est} we obtain
\begin{equation}\label{Sigmaest}
\Sigma\ll XQ\log X\,. 
\end{equation}
Bearing in mind \eqref{Sigma} and \eqref{Sigmaest} we establish Theorem \ref{Theorem1}.

\section{Proof of Theorem \ref{Theorem2}}
\indent

Using \eqref{deltachi} and the orthogonality of characters we write
\begin{align}\label{orthogonality2}
\sum\limits_{\mu X<n\leq X\atop{n\equiv a\, (q)\atop{n=[k^{1/\gamma}]}}}\Lambda(n)-\frac{X^\gamma}{\varphi(q)}
&=\sum\limits_{\mu X<n\leq X\atop{n=[k^{1/\gamma}]}}\Lambda(n)\frac{1}{\varphi(q)}\sum\limits_{\chi(q)}\chi(n)\overline{\chi}(a)-\frac{X^\gamma}{\varphi(q)}\nonumber\\
&=\frac{1}{\varphi(q)}\sum\limits_{\chi(q)}\Bigg(\overline{\chi}(a)\sum\limits_{\mu X<n\leq X\atop{n=[k^{1/\gamma}]}}\Lambda(n)\chi(n)-\delta(\chi)X^\gamma\Bigg)\,.
\end{align}
Now  \eqref{sumchi12} and \eqref{orthogonality2} imply
\begin{align*}
&\sum\limits_{a=1\atop{(a,\,q)=1}}^q\Bigg|\sum\limits_{\mu X<n\leq X\atop{n\equiv a\, (q)\atop{n=[k^{1/\gamma}]}}}\Lambda(n)-\frac{X^\gamma}{\varphi(q)}\Bigg|^2\nonumber\\
\end{align*}

\begin{align}\label{maxLXgamma}
&=\frac{1}{\varphi^2(q)}\sum\limits_{a=1\atop{(a,\,q)=1}}^q\Bigg|\sum\limits_{\chi(q)}\overline{\chi}(a)\Bigg(\sum\limits_{\mu X<n\leq X\atop{n=[k^{1/\gamma}]}}\Lambda(n)\chi(n)-\delta(\chi)X^\gamma\Bigg)\Bigg|^2\nonumber\\
&=\frac{1}{\varphi(q)}\sum\limits_{\chi(q)}\Bigg|\Bigg(\sum\limits_{\mu X<n\leq X\atop{n=[k^{1/\gamma}]}}\Lambda(n)\chi(n)-\delta(\chi)X^\gamma\Bigg)\Bigg|^2\,.
\end{align}
Set
\begin{equation}\label{Gamma}
\Gamma=\sum\limits_{q\le Q}\sum\limits_{a=1\atop{(a,\,q)=1}}^q\Bigg|\sum\limits_{\mu X<n\leq X\atop{n\equiv a\, (q)\atop{n=[k^{1/\gamma}]}}}\Lambda(n)-\frac{X^\gamma}{\varphi(q)}\Bigg|^2\,.
\end{equation}
From \eqref{deltachi}, \eqref{maxLXgamma} and \eqref{Gamma} we deduce
\begin{equation}\label{Gammadecomp}
\Gamma=\Gamma_1+\Gamma_2\,,
\end{equation}
where
\begin{align}
\label{Gamma1}
&\Gamma_1=\sum\limits_{q\le Q}\frac{1}{\varphi(q)}\Bigg|\sum\limits_{\mu X<n\leq X\atop{n=[k^{1/\gamma}]}}\Lambda(n)-X^\gamma+\mathcal{O}\Big(\log^2X\Big)\Bigg|^2\,,\\
\label{Gamma2}
&\Gamma_2=\sum\limits_{q\le Q}\frac{1}{\varphi(q)}\sum\limits_{\chi(q)\atop{\chi\neq\chi_0}}\Bigg|\sum\limits_{\mu X<n\leq X\atop{n=[k^{1/\gamma}]}}\Lambda(n)\chi(n)\Bigg|^2\,.
\end{align}
By \eqref{Shapiroformula} and \eqref{Gamma1} we find
\begin{equation}\label{Gamma1est}
\Gamma_1\ll\frac{X^{2\gamma}}{\log^2X}\sum\limits_{q\le Q}\frac{1}{\varphi(q)}\ll\frac{X^{2\gamma}}{\log X}\ll X^\gamma Q\log X\,.
\end{equation}
Next we consider $\Gamma_2$. Moving to primitive characters from \eqref{Gamma2} we get
\begin{align}\label{Gamma2est}
\Gamma_2&\ll\sum\limits_{q\le Q}\frac{1}{\varphi(q)}\sum\limits_{r|q\atop{r>1}}\sideset{}{^*}\sum\limits_{\chi(r)}\Bigg|\sum\limits_{\mu X<n\leq X\atop{n=[k^{1/\gamma}]}}\Lambda(n)\chi(n)\Bigg|^2+Q\log^4X\nonumber\\
&\ll\sum\limits_{1<r\le Q}\left(\sum\limits_{q\le Q\atop{r|q}}\frac{1}{\varphi(q)}\right)\sideset{}{^*}\sum\limits_{\chi(r)}\Bigg|\sum\limits_{\mu X<n\leq X\atop{n=[k^{1/\gamma}]}}\Lambda(n)\chi(n)\Bigg|^2+Q\log^4X\nonumber\\
&\ll\sum\limits_{1<r\le Q}\frac{1}{\varphi(r)}\left(\log\frac{Q}{r}\right)\sideset{}{^*}\sum\limits_{\chi(r)}\Bigg|\sum\limits_{\mu X<n\leq X\atop{n=[k^{1/\gamma}]}}\Lambda(n)\chi(n)\Bigg|^2+Q\log^4X\nonumber\\
&=\Gamma_3+\Gamma_4+Q\log^4X\,,
\end{align}
where
\begin{align}
\label{Gamma3}
&\Gamma_3=\sum\limits_{1<q\le Q_1}\frac{1}{\varphi(q)}\left(\log\frac{Q}{q}\right)\sideset{}{^*}\sum\limits_{\chi(q)}\Bigg|\sum\limits_{\mu X<n\leq X\atop{n=[k^{1/\gamma}]}}\Lambda(n)\chi(n)\Bigg|^2\,,\\
\label{Gamma4}
&\Gamma_4=\sum\limits_{Q_1<q\le Q}\frac{1}{\varphi(q)}\left(\log\frac{Q}{q}\right)\sideset{}{^*}\sum\limits_{\chi(q)}\Bigg|\sum\limits_{\mu X<n\leq X\atop{n=[k^{1/\gamma}]}}\Lambda(n)\chi(n)\Bigg|^2\,,\\
\label{Q1}
&Q_1=\log^4X\,.
\end{align}
First we estimate the sum $\Gamma_3$. Using \eqref{Shapiroformula}, \eqref{Gamma3}, \eqref{Q1} and Lemma \ref{SumLchigama} with $B=2$ we obtain
\begin{align}\label{Gamma3est}
\Gamma_3&\ll X^\gamma(\log X)\sum\limits_{1<q\le Q_1}\frac{1}{\varphi(q)}\sideset{}{^*}\sum\limits_{\chi(q)}\Bigg|\sum\limits_{\mu X<n\leq X\atop{n=[k^{1/\gamma}]}}\Lambda(n)\chi(n)\Bigg|
\ll\frac{X^{2\gamma}}{\log X}\ll X^\gamma Q\log X\,.
\end{align}
Next we consider the sum $\Gamma_4$. Put
\begin{equation}\label{i1}
i_1=\left[\frac{\log\frac{Q}{Q_1}}{\log2}\right]\,.
\end{equation}
By \eqref{Shapiroformula}, \eqref{Gamma4}, \eqref{Q1}, \eqref{i1} and Lemma \ref{largesieve} we derive
\begin{align}\label{Gamma4est}
&\Gamma_4\ll\sum\limits_{1\leq i\le i_1}\sum\limits_{\frac{Q}{2^i}<q\leq\frac{Q}{2^{i-1}}}\frac{1}{\varphi(q)}\left(\log\frac{Q}{q}\right)\sideset{}{^*}\sum\limits_{\chi(q)}
\Bigg|\sum\limits_{\mu X<n\leq X\atop{n=[k^{1/\gamma}]}}\Lambda(n)\chi(n)\Bigg|^2\nonumber\\
&\ll Q^{-1}\sum\limits_{1\leq i\le i_1}2^i\log2^i\sum\limits_{\frac{Q}{2^i}<q\leq\frac{Q}{2^{i-1}}}\frac{q}{\varphi(q)}\sideset{}{^*}\sum\limits_{\chi(q)}
\Bigg|\sum\limits_{\mu X<n\leq X\atop{n=[k^{1/\gamma}]}}\Lambda(n)\chi(n)\Bigg|^2\nonumber\\
&\ll Q^{-1}\sum\limits_{1\leq i\le i_1}2^i i\left(X^\gamma+\frac{Q^2}{4^i}\right)\sum\limits_{n\leq X\atop{n=[k^{1/\gamma}]}}\Lambda^2(n)\nonumber\\    
&\ll Q^{-1}X^\gamma(\log X)\sum\limits_{1\leq i\le i_1}2^i i\left(X^\gamma+\frac{Q^2}{4^i}\right)\nonumber\\     
&\ll X^{2\gamma}Q_1^{-1}\log^3X +X^\gamma Q\log X\nonumber\\     
&\ll X^\gamma Q\log X\,.                     
\end{align}
Taking into account \eqref{Gamma}, \eqref{Gammadecomp}, \eqref{Gamma1est}, \eqref{Gamma2est}, \eqref{Gamma3est} and \eqref{Gamma4est} we establish Theorem \ref{Theorem2}.

\section{Proof of Theorem \ref{Theorem3}}
\indent

From \eqref{deltachi} and the orthogonality of characters we have
\begin{align}\label{orthogonality3}
&\sum\limits_{\mu X<n\leq X\atop{n\equiv a\, (q)\atop{n=[k^{1/\gamma}]}}}\Lambda(n)n^{1-\gamma}e(t n^c)-\frac{\gamma}{\varphi(d)}\int\limits_{\mu X}^Xe(t y^c)\,dy\nonumber\\
&=\sum\limits_{\mu X<n\leq X\atop{n=[k^{1/\gamma}]}}\Lambda(n)n^{1-\gamma}e(t n^c)\frac{1}{\varphi(q)}\sum\limits_{\chi(q)}\chi(n)\overline{\chi}(a)-\frac{\gamma}{\varphi(d)}\int\limits_{\mu X}^Xe(t y^c)\,dy\nonumber\\
&=\frac{1}{\varphi(q)}\sum\limits_{\chi(q)}\Bigg(\overline{\chi}(a)\sum\limits_{\mu X<n\leq X\atop{n=[k^{1/\gamma}]}}\Lambda(n)\chi(n)n^{1-\gamma}e(t n^c)-\delta(\chi)\gamma\int\limits_{\mu X}^Xe(t y^c)\,dy\Bigg)\,.
\end{align}
Now  \eqref{sumchi12} and \eqref{orthogonality3} yield
\begin{align}\label{maxLexpXgamma}
&\sum\limits_{a=1\atop{(a,\,q)=1}}^q\Bigg|\sum\limits_{\mu X<n\leq X\atop{n\equiv a\, (q)\atop{n=[k^{1/\gamma}]}}}\Lambda(n)n^{1-\gamma}e(t n^c)-\frac{\gamma}{\varphi(d)}\int\limits_{\mu X}^Xe(t y^c)\,dy\Bigg|^2\nonumber\\
&=\frac{1}{\varphi^2(q)}\sum\limits_{a=1\atop{(a,\,q)=1}}^q\Bigg|\sum\limits_{\chi(q)}\overline{\chi}(a)
\Bigg(\sum\limits_{\mu X<n\leq X\atop{n=[k^{1/\gamma}]}}\Lambda(n)\chi(n)n^{1-\gamma}e(t n^c)-\delta(\chi)\gamma\int\limits_{\mu X}^Xe(t y^c)\,dy\Bigg)\Bigg|^2\nonumber\\
&=\frac{1}{\varphi(q)}\sum\limits_{\chi(q)}\Bigg|\Bigg(\sum\limits_{\mu X<n\leq X\atop{n=[k^{1/\gamma}]}}\Lambda(n)\chi(n)n^{1-\gamma}e(t n^c)-\delta(\chi)\gamma\int\limits_{\mu X}^Xe(t y^c)\,dy\Bigg)\Bigg|^2\,.
\end{align}
Put
\begin{equation}\label{Omega}
\Omega=\sum\limits_{q\le Q}\sum\limits_{a=1\atop{(a,\,q)=1}}^q
\Bigg|\sum\limits_{\mu X<n\le X\atop{n\equiv a\, ( \textmd{mod}\, q)\atop{n=[k^{1/\gamma}]}}}\Lambda(n)n^{1-\gamma}e(t n^c)-\frac{\gamma}{\varphi(d)}\int\limits_{\mu X}^Xe(t y^c)\,dy\Bigg|^2\,.
\end{equation}
From \eqref{deltachi}, \eqref{maxLexpXgamma} and \eqref{Omega} it follows
\begin{equation}\label{Omegadecomp}
\Omega=\Omega_1+\Omega_2\,,
\end{equation}
where
\begin{align}
\label{Omega1}
&\Omega_1=\sum\limits_{q\le Q}\frac{1}{\varphi(q)}\Bigg|\sum\limits_{\mu X<n\leq X\atop{n=[k^{1/\gamma}]}}\Lambda(n)n^{1-\gamma}e(t n^c)-\gamma\int\limits_{\mu X}^Xe(t y^c)\,dy+\mathcal{O}\Big(X^{1-\gamma}\log^2X\Big)\Bigg|^2\,,
\end{align}
\begin{align}
\label{Omega2}
&\Omega_2=\sum\limits_{q\le Q}\frac{1}{\varphi(q)}\sum\limits_{\chi(q)\atop{\chi\neq\chi_0}}\Bigg|\sum\limits_{\mu X<n\leq X\atop{n=[k^{1/\gamma}]}}\Lambda(n)\chi(n)n^{1-\gamma}e(t n^c)\Bigg|^2\,.
\end{align}
First we consider the sum $\Omega_1$. Using Lemma \ref{SIasympt}, Abel's summation formula and Lemma \ref{Kumchev} we deduce
\begin{align}\label{Omega1est1}
&\sum\limits_{\mu X<n\leq X\atop{n=[k^{1/\gamma}]}}\Lambda(n)n^{1-\gamma}e(t n^c)-\gamma\int\limits_{\mu X}^Xe(t y^c)\,dy\nonumber\\
&=\sum\limits_{\mu X<n\leq X}\Lambda(n)n^{1-\gamma}\big([-n^\gamma]-[-(n+1)^\gamma]\big)e(t n^c)-\gamma\int\limits_{\mu X}^Xe(t y^c)\,dy\nonumber\\
&=\sum\limits_{\mu X<n\leq X}\Lambda(n)n^{1-\gamma}\big((n+1)^\gamma-n^\gamma\big)e(t n^c)-\gamma\int\limits_{\mu X}^Xe(t y^c)\,dy\nonumber\\
&+\sum\limits_{\mu X<n\leq X}\Lambda(n)n^{1-\gamma}\big(\psi(-(n+1)^\gamma)-\psi(-n^\gamma)\big)e(t n^c)\nonumber\\
&=\gamma\sum\limits_{\mu X<n\leq X}\Lambda(n)e(t n^c)-\gamma\int\limits_{\mu X}^Xe(t y^c)\,dy\nonumber\\
&+\sum\limits_{\mu X<n\leq X}\Lambda(n)n^{1-\gamma}\big(\psi(-(n+1)^\gamma)-\psi(-n^\gamma)\big)e(t n^c)+\mathcal{O}(1)\nonumber\\
&\ll\frac{X}{e^{(\log X)^{1/5}}}\,.
\end{align}
Thus from \eqref{Omega1} and \eqref{Omega1est1} we get
\begin{equation}\label{Omega1est2}
\Omega_1\ll\frac{X^2}{e^{2(\log X)^{1/5}}}\sum\limits_{q\le Q}\frac{1}{\varphi(q)}\ll\frac{X^2}{\log^AX}\ll  X^{2-\gamma} Q\log X\,.
\end{equation}
Next we estimate $\Omega_2$. Moving to primitive characters from \eqref{Omega2} we find
\begin{align*}
\Omega_2&\ll\sum\limits_{q\le Q}\frac{1}{\varphi(q)}\sum\limits_{r|q\atop{r>1}}\sideset{}{^*}\sum\limits_{\chi(r)}
\Bigg|\sum\limits_{\mu X<n\leq X\atop{n=[k^{1/\gamma}]}}\Lambda(n)\chi(n)n^{1-\gamma}e(t n^c)\Bigg|^2+X^{2(1-\gamma)}Q\log^4X\nonumber\\
&\ll\sum\limits_{1<r\le Q}\left(\sum\limits_{q\le Q\atop{r|q}}\frac{1}{\varphi(q)}\right)\sideset{}{^*}\sum\limits_{\chi(r)}
\Bigg|\sum\limits_{\mu X<n\leq X\atop{n=[k^{1/\gamma}]}}\Lambda(n)\chi(n)n^{1-\gamma}e(t n^c)\Bigg|^2+X^{2(1-\gamma)}Q\log^4X\nonumber\\
\end{align*}

\begin{align}\label{Omega2est}
&\ll\sum\limits_{1<r\le Q}\frac{1}{\varphi(r)}\left(\log\frac{Q}{r}\right)\sideset{}{^*}\sum\limits_{\chi(r)}
\Bigg|\sum\limits_{\mu X<n\leq X\atop{n=[k^{1/\gamma}]}}\Lambda(n)\chi(n)n^{1-\gamma}e(t n^c)\Bigg|^2+X^{2(1-\gamma)}Q\log^4X\nonumber\\
&=\Omega_3+\Omega_4+X^{2(1-\gamma)}Q\log^4X\,,
\end{align}
where
\begin{align}
\label{Omega3}
&\Omega_3=\sum\limits_{1<q\le Q_2}\frac{1}{\varphi(q)}\left(\log\frac{Q}{q}\right)\sideset{}{^*}\sum\limits_{\chi(q)}\Bigg|\sum\limits_{\mu X<n\leq X\atop{n=[k^{1/\gamma}]}}\Lambda(n)\chi(n)n^{1-\gamma}e(t n^c)\Bigg|^2\,,\\
\label{Omega4}
&\Omega_4=\sum\limits_{Q_2<q\le Q}\frac{1}{\varphi(q)}\left(\log\frac{Q}{q}\right)\sideset{}{^*}\sum\limits_{\chi(q)}\Bigg|\sum\limits_{\mu X<n\leq X\atop{n=[k^{1/\gamma}]}}\Lambda(n)\chi(n)n^{1-\gamma}e(t n^c)\Bigg|^2\,,\\
\label{Q2}
&Q_2=(\log X)^{A+2}\,.
\end{align}
First we consider $\Omega_3$. By \eqref{Shapiroformula}, \eqref{Omega3}, \eqref{Q2}, Abel's summation formula and Lemma \ref{SumLexpchigama} with $B=A+1$ we derive
\begin{align}\label{Omega3est}
\Omega_3&\ll X(\log X)\sum\limits_{1<q\le Q_2}\frac{1}{\varphi(q)}\sideset{}{^*}\sum\limits_{\chi(q)}\Bigg|\sum\limits_{\mu X<n\leq X\atop{n=[k^{1/\gamma}]}}\Lambda(n)\chi(n)n^{1-\gamma}e(t n^c)\Bigg|\nonumber\\
&\ll\frac{X^2}{\log^AX}\ll  X^{2-\gamma} Q\log X\,.
\end{align}
Next we estimate $\Omega_4$. Put
\begin{equation}\label{i2}
i_2=\left[\frac{\log\frac{Q}{Q_2}}{\log2}\right]\,.
\end{equation}
We use \eqref{Omega4}, \eqref{Q2}, \eqref{i2} and Lemma \ref{largesieve} to see that
\begin{align*}
&\Omega_4\ll\sum\limits_{1\leq i\le i_1}\sum\limits_{\frac{Q}{2^i}<q\leq\frac{Q}{2^{i-1}}}\frac{1}{\varphi(q)}\left(\log\frac{Q}{q}\right)\sideset{}{^*}\sum\limits_{\chi(q)}
\Bigg|\sum\limits_{\mu X<n\leq X\atop{n=[k^{1/\gamma}]}}\Lambda(n)\chi(n)n^{1-\gamma}e(t n^c)\Bigg|^2\nonumber\\
&\ll Q^{-1}\sum\limits_{1\leq i\le i_1}2^i\log2^i\sum\limits_{\frac{Q}{2^i}<q\leq\frac{Q}{2^{i-1}}}\frac{q}{\varphi(q)}\sideset{}{^*}\sum\limits_{\chi(q)}
\Bigg|\sum\limits_{\mu X<n\leq X\atop{n=[k^{1/\gamma}]}}\Lambda(n)\chi(n)n^{1-\gamma}e(t n^c)\Bigg|^2\nonumber\\
&\ll Q^{-1}X^{2(1-\gamma)}\sum\limits_{1\leq i\le i_1}2^i i\left(X^\gamma+\frac{Q^2}{4^i}\right)\sum\limits_{n\leq X\atop{n=[k^{1/\gamma}]}}\Lambda^2(n)\nonumber\\  
\end{align*}

\begin{align}\label{Omega4est}  
&\ll Q^{-1}X^{2-\gamma}(\log X)\sum\limits_{1\leq i\le i_1}2^i i\left(X^\gamma+\frac{Q^2}{4^i}\right)\nonumber\\  
&\ll X^2Q_2^{-1}\log^3X +X^{2-\gamma} Q\log X\nonumber\\     
&\ll  X^{2-\gamma} Q\log X\,.                     
\end{align}
Summarizing \eqref{Omega}, \eqref{Omegadecomp}, \eqref{Omega1est2}, \eqref{Omega2est}, \eqref{Omega3est} and \eqref{Omega4est} we establish Theorem \ref{Theorem3}.

\vskip20pt
\footnotesize
\begin{flushleft}
S. I. Dimitrov\\
\quad\\
Faculty of Applied Mathematics and Informatics\\
Technical University of Sofia \\
Blvd. St.Kliment Ohridski 8 \\
Sofia 1756, Bulgaria\\
e-mail: sdimitrov@tu-sofia.bg\\
\end{flushleft}

\begin{flushleft}
Department of Bioinformatics and Mathematical Modelling\\
Institute of Biophysics and Biomedical Engineering\\
Bulgarian Academy of Sciences\\
Acad. G. Bonchev Str. Bl. 105, Sofia 1113, Bulgaria \\
e-mail: xyzstoyan@gmail.com\\
\end{flushleft}


\begin{thebibliography}{0}

\bibitem{Davenport} H. Davenport, {\it Multiplicative number theory},
Springer, (2000), Third ed.

\bibitem{Dimitrov1} S. I. Dimitrov, {\it A ternary diophantine inequality by primes with one of the form $p=x^2+y^2+1$},
Ramanujan J., \textbf{59}, 2, (2022), 571 -- 607.

\bibitem{Dimitrov2} S. I. Dimitrov,  {\it  A Bombieri -- Vinogradov type result for exponential sums over Piatetski-Shapiro primes},
Lith. Math. J., \textbf{62}, 4, (2022), 435 -- 446.

\bibitem{Graham-Kolesnik} S. W. Graham, G. Kolesnik, {\it Van der Corput's Method of Exponential Sums},
Cambridge University Press, New York, (1991).

\bibitem{Hooley} C. Hooley, {\it On the Barban-Davenport-Halberstam theorem I}, 
J. Reine Angew. Math., \textbf{274/275} (1975), 206 -- 223.

\bibitem{Iwaniec-Kowalski} H. Iwaniec, E. Kowalski, {\it Analytic number theory},
Colloquium Publications, \textbf{53}, Amer. Math. Soc., (2004).

\bibitem{Kumchev} A. Kumchev, Z. Petrov, {\it A hybrid of two theorems of Piatetski-Shapiro},
Monatsh. Math., {\bf 189}, (2018), 355 -- 376.

\bibitem{Li} J. Li, F. Xue, M. Zhang, {\it A ternary Diophantine inequality with prime numbers of a special form},
Period. Math. Hungar., \textbf{85}, 1, (2022), 14 -- 31.

\bibitem{Zhang-Li} J. Li, M. Zhang, F. Xue, {\it An additive problem over Piatetski-Shapiro primes and almost-primes},
Ramanujan J., \textbf{57}, (2022), 1307 -- 1333.

\bibitem{Lu} Y. Lu, {\it An additive problem on Piatetski-Shapiro primes},
Acta Math. Sin. (Engl. Ser.), \textbf{34}, (2018), 255 -- 264.

\bibitem{Montgomery} H. Montgomery, {\it Primes in arithmetic progressions}, 
Michigan Math. J., \textbf{17}, (1970), 33 -- 39.

\bibitem{Peneva} T. Peneva, {\it An additive problem with Piatetski-Shapiro primes and almost-primes},
Monatsh. Math., \textbf{140}, (2003), 119 -- 133.

\bibitem{Shapiro} I. I. Piatetski-Shapiro, {\it On the distribution of prime numbers in sequences of the form $[f(n)]$},
Mat. Sb., {\bf 33}, (1953), 559 -- 566.

\bibitem{Cai} X. Wang, Y. Cai, {\it An additive problem involving Piatetski-Shapiro primes},
Int. J. Number Theory, \textbf{7}, (2011), 1359 -- 1378.

\bibitem{Rivat-Sargos} J. Rivat, P. Sargos, {\it Nombres premiers de la forme $[n^c]$},
Canad. J. Math., \textbf{53}, (2001), 414 -- 433.

\bibitem{Rivat-Wu} J. Rivat, J. Wu, {\it Prime numbers of the form $[n^c]$},
Glasg. Math. J, {\bf 43}, (2001), 237 -- 254.

\bibitem{Tolev1} D. Tolev, {\it On a diophantine inequality involving prime numbers}, 
Acta Arith., {\bf 61}, (1992), 289 -- 306.

\bibitem{Tolev2} D. Tolev, {\it On a diophantine inequality with prime numbers of a special type},
Proc. Steklov Inst. Math., \textbf{299}, (2017), 261 -- 282.

\bibitem{Vaaler} J. D. Vaaler, {\it Some extremal functions in Fourier analysis},
Bull. Amer. Math. Soc., {\bf12}, (1985), 183 -- 216.

\bibitem{Zhu} L. Zhu, {\it A ternary diophantine inequality with prime numbers of a special type}, 
Proc. Indian Acad. Sci. Math. Sci., \textbf{130}, (2020), Art. 23. 

\end{thebibliography}
\end{document}